\newtheorem{theorem}{Theorem}[section]
\newtheorem{lemma}{Lemma}[section]
\newtheorem{remark}{Remark}[section]
\newtheorem{definition}{Definition}[section]
\newcommand\E{{E}}
\newcommand\calO{\mathcal O}
\newcommand\PP{{ P}}
\newcommand\Th{{\mathcal T}_h}
\newcommand\nn{\boldsymbol n}
\renewcommand\tt{\boldsymbol t}
\newcommand\tg{t}
\newcommand\grad{\operatorname{grad}}
\newcommand\bgrad{\operatorname{\bf grad}}
\renewcommand\div{\operatorname{div}}
\newcommand\bcurl{\operatorname{\bf curl}}
\newcommand\brot{\operatorname{\bf rot}}
\newcommand\rot{\operatorname{rot}}
\newcommand{\dO}{\,{\rm d}\Omega}
\newcommand{\dE}{\,{\rm d}\E}
\newcommand{\dP}{\,{\rm d}\PP}
\newcommand{\ds}{\,{\rm d}s}
\newcommand{\dx}{\,{\rm d}x}
\newcommand{\dS}{\,{\rm d}S}
\newcommand{\de}{\,{\rm d}e}
\newcommand{\df}{\,{\rm d}f}
\newcommand\bbf{\boldsymbol f}
\newcommand\qq{\boldsymbol q}
\newcommand\rr{\boldsymbol r}
\newcommand\bg{\boldsymbol g}
\newcommand\bpsi{\boldsymbol \psi}
\newcommand\btau{\boldsymbol \tau}
\newcommand\bbeta{\boldsymbol \eta}
\newcommand\bsigma{\boldsymbol \sigma}
\newcommand\bphi{\boldsymbol \varphi}
\newcommand\vphi{\varphi}
\newcommand\vv{\boldsymbol v}
\newcommand\HH{\boldsymbol H}
\newcommand\Vfdk{V^{\rm face}_{2,k}}
\newcommand\Vedk{V^{\rm edge}_{2,k}}
\newcommand\Vftk{V^{\rm face}_{3,k}}
\newcommand\Vetk{V^{\rm edge}_{3,k}}
\newcommand\Vettk{\widetilde{V}^{\rm edge}_{3,k}}   % extended space
\newcommand\Wetk{W^{\rm edge}_{3,k}}                      % enhanced space
\newcommand\Rort{\calR_{k}\slash\calR_{k-2}}
\newcommand{\Vhelem}[1]{V^{\rm elem}_{3,#1}}
\newcommand{\Vhv}{V^{\rm vert}_{3,k}}
\newcommand{\calF}{ \mathcal{F}}
\newcommand{\calB}{ \mathcal{B}}
\newcommand{\calG}{ \mathcal{G}}
\renewcommand{\O}{ {\mathcal O}}
\newcommand{\calP}{ \mathcal{P}}
\newcommand{\calR}{ \mathcal{R}}
\newcommand{\calS}{ \mathcal{S}}
\newcommand{\dofop}{{\mathcal D}}
\newcommand\R{\mathbb{R}}
\renewcommand{\P}{ {\mathbb P}}
\newcommand\Pit{\widetilde{\Pi}}
\newcommand{\myarrow}[1]{\xrightarrow[\hspace*{0.5cm}]{\raisebox{1mm}{\normalsize$#1$}}}
\newcommand{\shortarrow}[1]{\xrightarrow[\hspace*{0.4cm}]{\raisebox{1mm}{\normalsize$#1$}}}
\newcommand{\vshortarrow}[1]{\xrightarrow[\hspace*{0.1cm}]{\raisebox{1mm}{\normalsize$#1$}}}
\numberwithin{equation}{section}
\definecolor{dgreen}{rgb}{0,0,0}
\definecolor{dred}{rgb}{0,0,0}
\definecolor{dblue}{rgb}{0,0,0}
\begin{document}

\begin{center}
\begin{LARGE}
{\bf $H(\div)$ and $H({\bf curl})$-conforming VEM}
\end{LARGE}

\vskip1.5cm

\noindent
L. Beir\~ao da Veiga
\footnote{Dipartimento di Matematica, Universit\`a di Milano,
Via Saldini 50, 20133 Milano (Italy),
and IMATI del CNR, Via Ferrata 1, 27100 Pavia, (Italy)},
F. Brezzi
\footnote{IUSS, Piazza della Vittoria 15, 27100 Pavia (Italy),
and IMATI del CNR, Via Ferrata 1, 27100 Pavia, (Italy)}, 
L.~D.~Marini
\footnote{Dipartimento di Matematica, Universit\`a di Pavia,
and IMATI del CNR, Via Ferrata 1, 27100 Pavia, (Italy)},  
and A. Russo
\footnote{Dipartimento di Matematica e Applicazioni, Universit\`a di Milano-Bicocca,
via Cozzi 57, 20125 Milano (Italy)
and
IMATI del CNR, Via Ferrata 1, 27100 Pavia, (Italy)}

\end{center}

% \email{lourenco.beirao@unimi.it}

% \email{brezzi@imati.cnr.it}

% \email{marini@imati.cnr.it}

% \email{alessandro.russo@unimib.it}

\vskip1.5cm

%-------
\begin{center}
{\bf Abstract}
\end{center}
In the present paper we construct Virtual Element Spaces that are $H({\rm div})$-conforming and $H({\rm \bf curl})$-conforming on general polygonal and polyhedral elements;
these spaces can be interpreted as a generalization of well known Finite Elements. 
We moreover present the basic tools needed to make use of these spaces in the approximation of partial differential equations.  
Finally, we discuss the construction of exact sequences of VEM spaces. 

\vskip1cm

%-------

% ----------------------------------------------------
\section{Introduction}
% ----------------------------------------------------
%In recent times
The Virtual Element Methods where initially introduced in \cite{volley}, as a variant of classical Lagrange Finite Element Methods to accommodate the use of polygonal and polyhedral elements. Needless to say, they could be seen as an evolution of nodal Mimetic Finite Differences (see \cite{Brezzi:Buffa:Lipnikov:2009,BLM11}) as well as a variant of other Galerkin methods for polygonal and polyhedral elements (see e.g.{\color{dblue} \cite{Arroyo-Ortiz,MESHLESS 12.,GFEM 13.,MESHLESS 16.,Bishop,Bonelle:Ern:2014,JA12,FHK06,Fries:Belytschko:2010,GFEM 92.,GFEM 93.,Kuznetsov:Repin:2003,POLY [28],Sukumar:Malsch:2006,Tabarraei:Sukumar:2007,TPPM10,TPPM12,Wachspress75,Wachspress11}} and the references therein). Even more recently, in \cite{BFM-mixed} we started the extension to polygonal elements of Raviart-Thomas or BDM elements for mixed formulations (see e.g. \cite{Bo-Bre-For} and the references therein). These, in a sense, constitute the most natural and direct evolution of the original ``flux based'' Mimetic Finite Differences, as for instance in \cite{Hyman:Shashkov:1997b}. See also, for the more mathematical aspects, \cite{Brezzi:Lipnikov:Shashkov:2005,Brezzi:Lipnikov:Simoncini:2005,Brezzi:Lipnikov:Shashkov:Simoncini:2007}, as well as
\cite{BeiraodaVeiga:Manzini:2008b,Gyrya:Lipnikov:2008,BeiraoDaVeiga:Lipnikov:Manzini:2009}, the review papers \cite{Bochev:Hyman:2006,Droniou:Eymard:Gallouet:Herbin:2010,MFD[22]}, and the book \cite{MFD[23]}.
{\color{dred}
In addition to \cite{volley}, see for instance \cite{Brezzi:Marini:plates,VEM-elasticity,projectors,BM13,BFM-mixed} and references therein for {\color{dblue}applications of the Virtual Element Method to various types of problems}
}

On the other hand, to deal with a sufficiently wide range of mixed formulations (see again \cite{Bo-Bre-For} and the references therein), one needs to use a big variety of $H({\rm div})$ and
$H({\rm \bf curl})$-conforming spaces (to be used together with the more classical $H^1$-conforming and  $L^2$-conforming ones). See for instance \cite{Matiussi:1997} or \cite{AFW-Acta}.  See also the recent overview on Finite Element spaces presented in \cite{Periodic}.

The purpose of this paper is to indicate a possible strategy to construct the extensions of all these types of spaces to more general elemental geometries, and typically to polygonal and polyhedral elements. The use of curved edges or curved faces (that so far, in this context, was tackled only in \cite{Brezzi:Lipnikov:Shashkov:2006}) will be the object of future research.

As a general matter, the (vector valued) functions to be used, in each element, in the Virtual Element Methods are {\bf not} polynomials (although they contain suitable polynomial spaces within each element),
and are presented as solutions of (systems of)  partial differential equations.
However ``the name of the game'', in the VEM context, is to avoid solving these PDE systems, even in a roughly approximate way.  Hence, in order to be able to construct, element by element, the necessary local
matrices, we have to be able to construct suitable {\it projectors} from the local  VEM spaces to some polynomial spaces (whose degree will determine the final accuracy of the method).

In presenting our $H({\rm div})$-conforming and $H({\rm \bf curl})$-conforming {\color{dblue}spaces} we will therefore take care to show how, for {\color{dblue}them}, one can construct suitable $L^2$-projection operators on the corresponding
polynomial spaces. This of course will not always solve all the problems, but (as pointed out for instance in \cite{projectors} for some particular cases) will surely be a precious instrument.

As the variety of possible {\color{dblue}variants (required by different applications)} is overwhelming, we decided to limit ourselves, here, to the
presentation of a few typical cases (that in our opinion could be sufficient to give the general idea),
leaving to the very last (and short) section the task to give hints on {\color{dblue} some  of the possible variants}. In the same spirit, we decided not to present direct applications. We believe that, for the readers with some experience in the approximation of mixed formulations, the general ideas outlined in this paper should  be enough to understand the possible use of our
spaces {\color{dblue} for} most of the applications discussed in \cite{Bo-Bre-For}. Clearly, a lot of additional work,
and a lot of numerical experiments, will be needed for the tune-up of these methods in each particular type of application.
{\color{dblue}
To have an idea on the implementation of Virtual Element Methods we refer to the guidelines given in \cite{hitchhikers} for nodal virtual elements.
}

Here is an outline of the paper: in the next section we will introduce a suitable notation and recall
a few classical results of Calculus in several variables. Then we will present, each in a separate section,
the $H({\rm div})$-conforming and {\color{dblue} the} $H({\rm curl})$-conforming spaces for {\it  polygonal} elements, and the {\color{dblue} corresponding} ones for {\it polyhedral } elements. Next, we will briefly
recall the $H^1$-conforming and $L^2$-conforming spaces (as introduced for instance in \cite{volley}) and
discuss the possibility of having {\it exact sequences} of VEM spaces, in the spirit of \cite{AFW-Acta}.

In the last section, as announced already, we will give a short  hint of the huge variety of possible variants.

% ----------------------------------------------------
\section{Notation, Assumptions, and Known Results}\label{nota}
% ----------------------------------------------------

% and its barycenter by $B_{\O}$

{\color{dblue}
In what follows, we will detail the spaces and their degrees of freedom mainly {\bf at the element level}. One of the best features of Virtual Element Methods is the possibility to use elements having a very general geometry, and actually, in order to give the definition of the space we could use arbitrary {\it simply connected polygons and polyhedra}. In order to have optimal {\it interpolation errors}, as well as suitable stability properties in the applications to different problems, we would however need some mild assumptions. Here below, we give the {\it flavor} of the type of assumptions that are generally used in Virtual Element Methods.

In  two dimensions we will assume that we deal with a polygon $\E$
having $\ell_e$ edges and containing a disk $D_{\E}$ such that  $\E$ is star-shaped
with respect to all the points of $D_{\E}$.

In three dimensions we will assume that we are dealing with
a polyhedron $\PP$ having $\ell_e$ edges and  $\ell_f$ faces, containing a ball $B_{\PP}$ such that
$\PP$ is star-shaped with respect to all points of  $B_{\PP}$. We will also assume that each
face $f$  is star-shaped with respect to all the points of a disk $D_f$.

 Note that under all these
assumptions both $\E$, $\PP$, and each face of $\PP$ will have to be {\it simply connected}.

Actually it will not be a problem to use elements that are suitable unions of pieces that satisfy the assumptions above.

Needless to say, when dealing with a {\it sequence of decompositions} $\{\Th\}_h$, one should make some further assumptions. To start with, for every geometrical object $\calO$ that we are going to use in what follows (edge, face,
element, etc.) we will denote its {\it diameter} by $h_{\O}$. Then} we assume that there exists a positive number $\kappa$ such that, in two dimensions, for every decomposition ${\mathcal T}_h$ and for every element $\E$ in ${\mathcal T}_h$, we have:
\begin{itemize}
\item   $h_{D_{\E}}\ge\kappa h_{\E}$
\item for every edge $e$ of $\E$ we have $h_e\ge\kappa h_{\E}$
\end{itemize}
and in three dimensions, for every decomposition ${\mathcal T}_h$
and for every element $\PP$ in ${\mathcal T}_h$, we have
\begin{itemize}
\item   $h_{B_{\PP}}\ge\kappa h_{\PP}$
\item for every  face $f$ of $\PP$  we have $h_{D_f}\ge\kappa h_{\PP}$
\item for every  face $f$ of $\PP$ and for every edge $e$ of $f$: $h_e\ge\kappa h_{f}$.
\end{itemize}
 We point out that these additional assumptions will imply, in particular, that there exists an
 integer number $K$ depending only on $\kappa$ such that every element has less than $K$ faces
 and each face (and each element in two dimensions) has less than $K$ edges.

 {\color{dblue} We also point out that the above assumptions, that are indeed quite general, are very likely unnecessarily restrictive. Indeed, from our numerical experiments, these methods show a remarkable robustness, allowing for instance polygons with edges that are arbitrarily small  compared with the
 diameter of the element itself. We consider however that he present generality is sufficient in almost
 every practical case, and we decided, for the moment, to avoid unnecessary technical complications
 in order to increase it.}

\medskip

Here below we introduce now  some additional notation.
 \begin{itemize}
 \item For a space of functions $\calF(\O)$ defined on $\O$, we denote by $\calF(\O){/\R}$ (or simply by $\calF{/\R}$ when the context is clear) the subset of functions having zero mean value on $\O$.
     \end{itemize}
%  Moreover, in two dimensions, we will use the following notation.
\begin{itemize}
\item {\color{dblue}\bf In two dimensions,} for a  polygon $\E$, $\nn_{\E}$ or simply $\nn$ will be the outward normal unit vector, and $\tt_{\E}$, or simply $\tt$,  will be the tangent counterclockwise unit vector.
    \item {\color{dblue} For} a scalar field $q$ and a vector field ${\vv}=(v_1,v_2)$, we will set (with a usual notation)
\begin{equation*}
\brot q:=\Big(\frac{\partial q}{\partial y},-\frac{\partial q}{\partial x}\Big)
\qquad \rot{\vv}:=\frac{\partial v_2}{\partial x}-\frac{\partial v_1}{\partial y}
\end{equation*}
\end{itemize}
%In three dimensions we will also use the following notation.
\begin{itemize}
\item {\color{dblue} \bf In three dimensions,} for a face $f$ of a polyhedron $\PP$, the {\bf tangential} differential operators  will
 be denoted by a subscript 2, as in: $\div_2$, $\rot_2$, ${\brot_2}$, $\bgrad_2$, $\Delta_2$, and so on.

 \item {\color{dblue} When dealing with a single polyhedron, we will always assume that all its faces are oriented
       with the {\it outward normal}, while, when necessary, we will have to choose an orientation
       for every edge. Obviously when dealing with a decomposition in several polyhedra
       we will also have to decide an orientation for every face.}

  \item  On a polyhedron $\PP$, on each face $f$ we will have to distinguish between the unit
 outward {\it normal to the plane of the face} (that we denote by $\nn_{\PP}^f$), and the unit vector {\it in the plane of the face} that is normal to the boundary $\partial f$ (that will be denoted, on each edge $e$, by $\nn_f^e$). On each face, $\tt_f$ or simply $\tt$ will again be the unit counterclockwise tangent vector on $\partial f$.

 \item For a (smooth enough) three dimensional vector-valued function $\bphi$ on $\PP$, and for a
face $f$ with normal $\nn_{\PP}^f$,
we define the tangential component of $\bphi$ as
\begin{equation}\label{partetan}
\bphi_f:=\bphi-(\bphi\cdot\nn_{\PP}^f)\nn_{\PP}^f,
\end{equation}
while $\bphi_{\tg}$ denotes the vector field defined on $\partial\PP$ such that, on each face $f\in\partial\PP$, its restriction to the face $f$ satisfies:
\begin{equation}\label{deftg}
\bphi_{\tg|f}=\bphi_f .
\end{equation}
\item Note that $\bphi_f$ as defined in \eqref{partetan} is different from
\begin{equation}\label{wedgenorm}
\bphi\wedge\nn_{\PP}^f;
\end{equation}
indeed, for instance, if $\nn_{\PP}^f=(0,0,1)$ and $\bphi=(\phi_1,\phi_2,\phi_3)$, then
\begin{equation*}
\bphi_f=(\phi_1,\phi_2,0)\qquad \bphi\wedge\nn_{\PP}^f=(\phi_2,-\phi_1,0).
\end{equation*}
\item With an abuse
of language, sometimes we will treat both $\bphi_f$ and $\bphi\wedge\nn_{\PP}^f$ as 2-d vectors in the plane of the face. In the previous case, then, we would often {\color{dblue} take} $\bphi_f=(\phi_1,\phi_2)$ and $\bphi\wedge\nn_{\PP}^f=(\phi_2,-\phi_1)$.

\end{itemize}

We will now recall some basic properties of Calculus of several variables, applied in particular
to polynomial spaces. Before doing that, we recall {\color{dblue} once more} that our assumptions imply that all our elements are simply connected.

 For a generic non negative number $k$ and for a generic geometrical object $\O$
 in 1,2, or 3 dimensions we will denote
 \begin{itemize}
\item $\P_{k}(\O)=\mbox{ Polynomials of degree $\leq k$ on }\O$,
 \end{itemize}
 with the additional (common) convention that
  \begin{itemize}
\item $\P_{-1}(\O)=\{0\}$.
 \end{itemize}
Moreover, with a common abuse of language, we will often say
``polynomial of degree $k$'' meaning
actually ``polynomial of degree $\le k$''.
Often the geometrical object $\O$ will be omitted when no confusion arises.

In all the following diagrams \eqref{gradrotex}, \eqref{rotdivex}, and
\eqref{gradrotdivex}, {\color{dblue} as well as in the ones at the end, as \eqref{2D:virt:seq1:k},
\eqref{2D:virt2:seq:k}, and \eqref{3D:virt:seq:k}}
we will denote by $i$ the mapping that to every real {\it number}
$c$ associates the constant {\it function} identically equal to $c$, and
by $o$ the mapping that to every {\it function} associates the {\it number} $0$.
Then we recall that, in 2 and in 3 dimensions, we have the exactness of the following
sequences.
%We recall that, in 2 and in 3 dimensions, we have the exactness of the following
%sequences. In all the following diagrams \eqref{gradrotex}, \eqref{rotdivex}, and
%\eqref{gradrotdivex} we will denote by $i$ the mapping that to every real {\it number}
%$c$ associates the constant {\it function} identically equal to $c$, and we denote
%by $o$ the mapping that to every {\it function} associates the {\it number} $0$.

\noindent In  2 dimensions
\begin{equation}\label{gradrotex}
\R \myarrow{i} \P_r \myarrow{\bgrad} (\P_{r-1})^2 \myarrow{\rot} \P_{r-2} \myarrow{o} 0
\end{equation}
or, equivalently,
\begin{equation}\label{rotdivex}
\R \myarrow{i} \P_r \myarrow{\brot} (\P_{r-1})^2 \myarrow{\div} \P_{r-2} \myarrow{o} 0
\end{equation}
are exact sequences. In three dimensions we have that
\begin{equation}\label{gradrotdivex}
\R \myarrow{i} \P_r \myarrow{\bgrad} (\P_{r-1})^3
\myarrow{\bcurl} (\P_{r-2})^3 \myarrow{\div} \P_{r-3} \myarrow{o} 0
\end{equation}
is also an exact sequence. We recall that the {\it exactness}
means that {\it the image of every
operator coincides with the kernel of the following one}. To better explain the consequences of these statements  we introduce an additional notation. For $s$ integer $\ge 1$, in two dimensions we denote by
\begin{itemize}
\item  $\calG_{s-1}$ the set $\bgrad(\P_s)$,
\item  $\calR_{s-1}$ the set $\brot(\P_{s})$,
\end{itemize}
and in three dimensions
\begin{itemize}
\item  $\calG_{s-1}$ the set $\bgrad(\P_s)$,
\item  $\calR_{s-1}$ the set $\bcurl\Big((\P_{s})^3\Big)$.
\end{itemize}
If we are considering polynomials on a domain $\O$ (not disgustingly irregular) we
might use the $L^2(\O)$ or (in $d$ dimensions) the  $(L^2(\O))^d$ inner product, and introduce
\begin{itemize}
\item
$\calG^{\perp}_{s}$
as the {\it orthogonal} of $\calG_{s}$ in $(\P_{s})^d$,
\item and $\calR^{\perp}_{s}$
as the {\it orthogonal} of $\calR_{s}$ in $(\P_{s})^d$.
\end{itemize}
Obviously, $(\P_{s})^d=\calG_{s}\oplus \calG^{\perp}_{s}=\calR_{s}\oplus\calR^{\perp}_{s}$.
In a similar way, the space $\P_s$ could
be seen as decomposed in the subspace of constants (the image of  $i:\R\longrightarrow \P_s$) and the polynomials
in $\P_s$ having zero mean value on $\O$ (and hence orthogonal to the constants), that is  $(\P_s(\O)){/\R}$.

We recall now some of the properties following from the exactness of the above sequences.
The exactness of the sequence
\eqref{gradrotex} implies in particular that for all integer $s$:
\begin{equation}\label{ex1}
\begin{aligned}
&\mbox{i) $\bgrad$ is an isomorphism from $(\P_s){/\R}$ to $\calG_{s-1}$},\\
&\mbox{ii) $\{\vv\in(\P_{s})^2\}\Rightarrow \{\rot\vv=0$ iff $\vv\in\calG_{s}\}$},\\
&\mbox{iii) $\rot$ is an isomorphism from $\calG_{s}^\perp$ to the whole $\P_{s-1}$},
\end{aligned}
\end{equation}
and equivalently \eqref{rotdivex} implies that
\begin{equation}\label{ex2}
\begin{aligned}
&\mbox{i) $\brot$ is an isomorphism from $(\P_s){/\R}$ to $\calR_{s-1}$},\\
&\mbox{ii)$\{\vv\in(\P_{s})^2\}\Rightarrow \{\div\vv=0$ iff $\vv\in\calR_{s}\}$},\\
&\mbox{iii) $\div$ is an isomorphism from $\calR_{s}^\perp$ to the whole $\P_{s-1}.$}
\end{aligned}
\end{equation}
Finally, the exactness of the sequence \eqref{gradrotdivex} implies in particular that,  for all integer $s$:
\begin{equation}\label{ex3}
\begin{aligned}
&\mbox{i) $\{\vv\in(\P_{s})^3\}\Rightarrow \{\bcurl\vv=0$ iff $\vv\in\calG_{s}\}$},\\
&\mbox{ii)$\{\vv\in(\P_{s})^3\}\Rightarrow \{\div\vv=0$ iff $\vv\in\calR_{s}\}$},\\
&\mbox{iii) $\bgrad$ is an isomorphism from $(\P_s){/\R}$ to $\calG_{s-1}$},\\
&\mbox{iv) $\bcurl$ is an isomorphism from $\calG_{s}^\perp$ to $\calR_{s-1}$},\\
&\mbox{v) $\div$ is an isomorphism from $\calR_{s}^{\perp}$ to the whole $\P_{s-1}$}.
\end{aligned}
\end{equation}

\begin{remark}\label{calculus} Properties [\ref{ex1};ii)],  [\ref{ex2};ii)], and  [\ref{ex3};i) and ii)] are just particular cases of well known results in Calculus. Indeed, on a simply connected
domain, we know that a (smooth enough) vector field $\vv$ having $\rot\vv=0$ (in 2 dimensions) or $\bcurl\vv=0$ (in 3 dimensions) is necessarily a gradient, and a (smooth enough) vector $\vv$ field having $\div\vv=0$  is necessarily a $\brot$ (in 2 dimensions) or a $\bcurl$
(in 3 dimensions).
\end{remark}

\noindent To all these spaces we can attach their dimensions.  {\color{dblue} To start with, } we  denote
by $\pi_{k,d}$ the dimension of the space $\P_k(\R^d)$, that is,
\begin{equation}\label{dimpi}
\pi_{k,1}=k+1;\ \pi_{k,2}=\dfrac{(k+1)(k+2)}{2};\ \pi_{k,3}=\dfrac{(k+1)(k+2)(k+3)}{6}.
\end{equation}
We then consider the spaces of (vector valued) polynomials $(\P_k)^d$ whose dimension is obviously
\begin{equation}\label{dimbpi}
\dim\{(\P_k)^d\}=d\pi_{k,d}.
\end{equation}
Among them, we consider those that are {\it gradients} (that we already called $\calG_k$), and we denote by $\gamma_{k,d}$ their dimension:
\begin{equation}\label{dimG}
\dim\{\calG_k\}\mbox{ in $d$ dimensions }\equiv\,\gamma_{k,d}=\pi_{k+1,d}-1.
\end{equation}
Needless to say, $\gamma_{k,2}$ also equals the dimension $\rho_{k,2}$ of $\brot(\P_{k+1})$ (that is,
$\calR_k$ in two dimensions):
\begin{equation}
\dim\{\calR_k\}\mbox{ in $2$ dimensions }=\rho_{k,2}=\gamma_{k,2}=\pi_{k+1,2}-1.
\end{equation}

 \noindent We also have (obviously), in $d$ dimensions,
\begin{equation}\label{dimGperp}
\dim\{\calG_k^{\perp}\}\mbox{ in $d$ dimensions }=d\pi_{k,d}-\gamma_{k,d}=
d\pi_{k,d}-\pi_{k+1,d}+1.
\end{equation}
In 2 dimensions, looking at [\ref{ex1};iii)] {\color{dblue} and at [\ref{ex2};iii)] we see that the dimension of $\calG_{k}^{\perp}$ as well as that of $\calR_{k}^{\perp}$
equal that of $\P_{k-1}$, that is
\begin{equation} \label{RG2d}
\dim\{\calG_{k}^{\perp}\}=\dim\{\calR_{k}^{\perp}\}=\pi_{k-1,2}
\qquad\mbox{in two dimensions.}
\end{equation}
}

On the other hand, for $d=3$, we can use [\ref{ex3};iv)] and see
that  the dimension
$\rho_{k-1,3}$ of $\calR_{k-1}=\bcurl((P_{k})^3)$ is given by
\begin{equation}\label{dimGperp3}
\rho_{k-1,3}=\dim\{\calR_{k-1}\}=\dim\{\calG_k^{\perp}\}=3\pi_{k,3}-\pi_{k+1,3}+1,
\end{equation}
while, following [\ref{ex3};v], {\color{dblue} we have
\begin{equation} \label{RG3d}
\dim\{\calR_{k}^{\perp}\}=\pi_{k-1,3}\qquad\mbox{in three dimensions}
\end{equation}

We summarize all the above results on the dimensions of polynomial spaces in the following
equations. In {\bf two dimensions}:
\begin{equation} \label{in2dim}
\dim\{\calG_{k}\}=\dim\{\calR_{k}\}= \pi_{k+1,2}-1
\qquad\dim\{\calR_{k}^{\perp}\}=\dim\{\calG_{k}^{\perp}\}=\pi_{k-1,2}
\end{equation}
 and in {\bf three dimensions}:
\begin{multline} \label{in3dim}
\dim\{\calG_{k}\}=\pi_{k+1,3}-1, \qquad\dim\{\calG_{k}^{\perp}\}=3\pi_{k,3}-\pi_{k+1,3}+1\\
\dim\{\calR_{k}\}= 3\pi_{k+1,3}-\pi_{k+2,3}+1\qquad
\qquad\dim\{\calR_{k}^{\perp}\}=\pi_{k-1,3}.
\end{multline}
}

{\color{dgreen} As announced, the definition of our local Virtual Element spaces will be done as the solution, within each element, of a suitable {\it div-curl} system. In view of that, it will be convenient to recall the {\it compatibility conditions} (between the data inside the element and the ones at the boundary) that are required in order to have a solution.  To start with, for a polygon $\E$ we define
\begin{equation}\label{defHd2}
H(\div;\E):=\{\vv\in (L^2(\E))^2 \mbox{ such that }\div\vv\in L^2(\E)\},
\end{equation}
\begin{equation}\label{defHr2}
H(\rot;\E):=\{\vv\in (L^2(\E))^2 \mbox{ such that }\rot\vv\in L^2(\E)\},
\end{equation}
and for a polyhedron $\PP$
\begin{equation}\label{defHd3}
H(\div;\PP):=\{\vv\in (L^2(\PP))^3 \mbox{ such that }\div\vv\in L^2(\PP)\},
\end{equation}}
{\color{dgreen}
\begin{equation}\label{defHr3}
H(\rot;\PP):=\{\vv\in (L^2(\PP))^3 \mbox{ such that }\bcurl\vv\in (L^2(\PP))^3\}.
\end{equation}
We now assume that we are given, on a simply connected polygon $\E$, two smooth functions $f_d$ and
$f_r$, and, on the boundary $\partial\E$, two edge-wise smooth functions $g_n$ and $g_t$.
We recall that the problem: {\it find $\vv\in H(\div;\E)\cap H(\rot;\E)$ such that:}
\begin{equation}
\div\vv=f_d \mbox{ and }\rot\vv=f_r\mbox{ in }\E\quad\mbox{ and }\quad \vv\cdot\nn=g_n \mbox{ on }
\partial\E
\end{equation}
has a unique solution if and only if
\begin{equation}
\int_{\E}\div\vv\dE = \int_{\partial\E} g_n\ds.
\end{equation}
Similarly the problem: {\it find $\vv\in H(\div;\E)\cap H(\rot;\E)$ such that:}
\begin{equation}
\div\vv=f_d \mbox{ and }\rot\vv=f_r\mbox{ in }\E\quad\mbox{ and }\quad \vv\cdot\tt=g_t \mbox{ on }
\partial\E
\end{equation}
has a unique solution if and only if
\begin{equation}
\int_{\E}\rot\vv\dE = \int_{\partial\E} g_t\ds.
\end{equation}
In three dimension, on a simply connected polyhedron $\PP$ we assume that we are given a smooth scalar function
$f_d$ and a smooth vector valued function $\bbf_r$ with $\div\bbf_r=0$. On the boundary $\partial\PP$
we assume that we are given a face-wise smooth scalar function $g_n$ and a face-wise smooth tangent
vector field $\bg_t$ whose tangential components are continuous (with a natural meaning) at the edges
of $\partial\PP$. Then we recall that
the problem: {\it find $\vv\in H(\div;\PP)\cap H(\bcurl;\PP)$ such that:}
\begin{equation}
\div\vv=f_d \mbox{ and }\bcurl\vv=\bbf_r\mbox{ in }\PP\quad\mbox{ and }\quad \vv\cdot\nn=g_n \mbox{ on }
\partial\PP
\end{equation}
has a unique solution if and only if
\begin{equation}
\int_{\PP}\div\vv\dP = \int_{\partial\PP} g_n\ds,
\end{equation}
and similarly the problem: {\it find $\vv\in H(\div;\PP)\cap H(\bcurl;\PP)$ such that:}
\begin{equation}
\div\vv=f_d \mbox{ and }\bcurl\vv=\bbf_r\mbox{ in }\PP\quad\mbox{ and }\quad \vv_t=\bg_t \mbox{ on }
\partial\PP
\end{equation}
has a unique solution if and only if
\begin{equation}
\bbf_r\cdot\nn = \rot_2 \bg_t \mbox{ on }\partial\PP.
\end{equation}
For more details concerning the solutions of the {\it div-curl system} we refer, for instance, to
\cite{Auchmuty:2d}, \cite{Auchmuty:3d} and the references therein.

Finally, in order to help the reader to understand what we consider as {\it feasible} (in a code),} we recall that we assume to be able to integrate any polynomial on any polygon or polyhedron, for instance through formulae of the type
\begin{equation}\label{intpol}
\int_{\E}x_1^k=\frac{1}{k+1}\int_{\partial\E}x_1^{k+1}\,n_1\ds.
\end{equation}

%%%%%%%%%%%%%%%%%%%%%%%%
%%%%%%%%%%%%%%%%%%%%%%%%
%%%%%%%%%%%%%%%%%%%%%%%%
%%%%%%%%%%%%%%%%%%%%%%%%
%
%
%
%
%
%
%
%
%%%%%%%%%%%%%%%%%%%%%%%%   FACES  2 D
%
%
%
%
%
%
%
%
%%%%%%%%%%%%%%%%%%%%%%%%
%%%%%%%%%%%%%%%%%%%%%%%%
%%%%%%%%%%%%%%%%%%%%%%%%
%%%%%%%%%%%%%%%%%%%%%%%%

% -------------------------------------------------
\section{2D Face Elements}
% -------------------------------------------------

These spaces are the same of Brezzi-Falk-Marini \cite{BFM-mixed}, although here we propose a different
set of degrees of freedom.

% --------------------------------------------------
\subsection{ The local space}%%   FACES  2 D
% --------------------------------------------------

On a polygon $\E$, {\color{dblue} for $k$ integer $\ge 1$,} we set:
\begin{multline}\label{Vface2d}
%\begin{split}
\Vfdk(\E):=\{\vv\in H(\div;\E)\cap H(\rot;\E)  :
\vv\cdot\nn_{|e}\in\P_{k}(e)~\forall \mbox{ edge $e$ of } \E,\\
\bgrad\div\vv\in\calG_{k-2}(\E), \mbox{ and }
\rot\vv\in\P_{k-1}(E)\}.
%\end{split}
\end{multline}

% --------------------------------------------------
\subsection{Dimension of the space $\Vfdk(\E)$}%%   FACES  2 D
% --------------------------------------------------
{\color{dgreen}
We recall from our introduction that, given
 \begin{itemize}
\item a function $g$ defined on $\partial\E$ such that $g_{|e}\in\P_{k}(e)$ for all $e \in \partial E$,
\item a polynomial $f_d\in\P_{k-1}(\E)$ such that
\begin{equation}\label{compadd}
\int_{\E}f_d\dE=\int_{\partial\E} g \ds ,
\end{equation}
\item a polynomial $f_r\in\P_{k-1}(\E)$ ,
\end{itemize}
we can find a unique vector $\vv\in\Vfdk(\E)$ such that
\begin{equation}\label{perdimfbd}
\vv\cdot\nn=g \mbox{ on }\partial\E, \quad \div\vv=f_d\mbox{ in }\E, \quad \rot\vv=f_r\mbox{ in }\E.
\end{equation}}
 This easily implies that the dimension of $\Vfdk(\E)$ is given by:
\begin{equation}\label{dimfdd}
\begin{aligned}
\dim\Vfdk(\E) & =\ell_e\dim\P_{k}(e)+\{\dim \P_{k-1}(\E)-1\}+\dim\P_{k-1}(\E) \\
& =\ell_e\pi_{k,1}+\pi_{k-1,2}-1+\pi_{k-1,2}
\end{aligned}
\end{equation}

\begin{remark}\label{ttraces} We note that, for a vector-valued function in $H(\div;\E)\cap H(\rot;\E)$, one can define both the normal and the tangential trace
{\it on each edge} of $\partial\E$ (see \cite{Costabel}).
\end{remark}

% ----------------------------------------------------
\subsection{ The Degrees of Freedom}%%   FACES  2 D
% ----------------------------------------------------

A convenient set of degrees of freedom for functions  $\vv$  in $\Vfdk(\E)$ will be:
\begin{eqnarray}\label{doffdd0}
&\int_e{\vv\cdot\nn}\,{p}_{\,k}\,\de
&\quad \mbox{ for all edge $e$, for all }\;
p_{k}\in\P_k(e),\label{dof1}\\[3pt]
&\int_{\E}{\vv\cdot \bg_{k-2}}\dE &\quad\mbox{ for all $\bg_{k-2}\in\calG_{k-2}$},  \label{dof2}
\\[3pt]
&\int_{\E}{\vv\cdot \bg_{k}^\perp}\dE  &\quad \mbox{ for all $\bg_{k}^\perp\in\calG_{k}^\perp$}
.\label{dof3}
\end{eqnarray}
{\color{dblue}Remembering \eqref{in2dim} we easily see that  number of degrees of freedom \eqref{dof1}--\eqref{dof3} equals the dimension of $\Vfdk(\E)$ as given in \eqref{dimfdd}.}

% ----------------------------------------------
\subsection{ Unisolvence}%%   FACES  2 D
% ----------------------------------------------

Since the number of degrees of freedom
\eqref{dof1}-\eqref{dof3} equals the dimension of $\Vfdk(\E)$, to prove unisolvence  we just need to show that
if for a given $\vv$ in $\Vfdk(\E)$ all the degrees of freedom \eqref{dof1}-\eqref{dof3} are zero,
that is if
\begin{eqnarray}\label{doffdd}
&\int_e{\vv\cdot\nn}\,{p}_{\,k}\,\de=0&
\quad \mbox{ for all edge $e$, for all }\;
p_{k}\in\P_k(e),\label{dof10}\\[3pt]
&\int_{\E}{\vv\cdot \bg_{k-2}}\dE=0 &\quad\mbox{ for all $\bg_{k-2}\in\calG_{k-2}$},  \label{dof20}
\\[3pt]
&\int_{\E}{\vv\cdot \bg_{k}^\perp}\dE=0  &\quad \mbox{ for all $\bg_{k}^\perp\in\calG_{k}^\perp$},
\label{dof30}
\end{eqnarray}
then we must have $\vv=0$. For this we introduce a couple of preliminary observations.
\begin{lemma}\label{primi0}
If $\vv\in \Vfdk(\E)$ and if \eqref{dof10} and \eqref{dof20} hold, then
\begin{equation}\label{orthograd2}
\int_{\E} \vv\cdot\bgrad\varphi\dE=0\quad
\forall\varphi\in H^1(\E).
\end{equation}
\end{lemma}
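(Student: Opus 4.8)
The plan is to reduce the statement to Green's formula and then to annihilate both resulting terms using the two sets of vanishing degrees of freedom. Concretely, for any $\varphi\in H^1(\E)$ one has the integration-by-parts identity
\begin{equation*}
\int_{\E}\vv\cdot\bgrad\varphi\dE=-\int_{\E}(\div\vv)\,\varphi\dE+\int_{\partial\E}(\vv\cdot\nn)\,\varphi\ds,
\end{equation*}
which is legitimate since $\vv\in H(\div;\E)$ and, by the definition of $\Vfdk(\E)$ together with Remark \ref{ttraces}, the normal trace $\vv\cdot\nn$ is edge-wise polynomial, so that the boundary pairing is an honest sum of integrals over the edges of $\partial\E$. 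Thus it suffices to show that $\vv\cdot\nn=0$ on $\partial\E$ and that $\div\vv=0$ in $\E$.

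The first fact is immediate: on each edge $e$ we have $\vv\cdot\nn_{|e}\in\P_k(e)$ by the definition of $\Vfdk(\E)$, so we may take $p_k=\vv\cdot\nn_{|e}$ in \eqref{dof10} and obtain $\int_e(\vv\cdot\nn)^2\de=0$, hence $\vv\cdot\nn_{|e}=0$ on every edge. For the second fact I first observe that the defining condition $\bgrad\div\vv\in\calG_{k-2}=\bgrad(\P_{k-1})$ forces $\div\vv$ to differ from a polynomial in $\P_{k-1}$ by a constant, so in fact $\div\vv\in\P_{k-1}(\E)$. Next, writing a generic element of $\calG_{k-2}$ as $\bgrad q$ with $q\in\P_{k-1}$, condition \eqref{dof20} reads $\int_{\E}\vv\cdot\bgrad q\dE=0$ for all $q\in\P_{k-1}$. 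Integrating by parts and using the already established $\vv\cdot\nn=0$ gives $\int_{\E}(\div\vv)\,q\dE=0$ for all $q\in\P_{k-1}$; choosing $q=\div\vv$ (which is allowed precisely because $\div\vv\in\P_{k-1}$) yields $\int_{\E}(\div\vv)^2\dE=0$, i.e. $\div\vv=0$.

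With both $\vv\cdot\nn=0$ on $\partial\E$ and $\div\vv=0$ in $\E$, the Green formula above gives $\int_{\E}\vv\cdot\bgrad\varphi\dE=0$ for every $\varphi\in H^1(\E)$, which is exactly \eqref{orthograd2}. I do not expect a serious obstacle: the only points requiring care are the justification of Green's formula for an $H(\div)$ field tested against an arbitrary $H^1$ function (where Remark \ref{ttraces} and the edge-wise regularity of the normal trace are used to turn the boundary term into a genuine integral that vanishes once $\vv\cdot\nn=0$), and the observation that the definition of the space actually pins $\div\vv$ down to $\P_{k-1}$, without which the choice $q=\div\vv$ in the last step would not be available.
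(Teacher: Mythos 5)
Your proof is correct and follows essentially the same route as the paper: identify that the definition forces $\div\vv\in\P_{k-1}$, test the divergence against itself via integration by parts, kill the boundary and volume terms with \eqref{dof10} and \eqref{dof20}, and conclude with Green's formula once $\div\vv=0$ and $\vv\cdot\nn=0$. The only (immaterial) difference is ordering: you establish $\vv\cdot\nn=0$ pointwise first and then use it, whereas the paper annihilates the boundary term directly from \eqref{dof10} inside the same integration by parts.
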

\begin{proof} {\color{dblue} Using the fact that
 $\div\vv\in\P_{k-1}$ and setting $q_{k-1}:=\div\vv$ we have
\begin{multline}\label{added:X1}
\int_{\E}|\div\vv|^2\dE=\int_{\E}\div\vv\,q_{k-1}\dE
=\int_{\partial\E}\vv\cdot \nn q_{k-1}\ds-\int_{\E}\vv \cdot \bgrad q_{k-1}\dE
=0,
\end{multline}
 where the last step follows from \eqref{dof10} and \eqref{dof20}. Hence we
 have that $\div\vv=0$ and since (using again \eqref{dof10}) $\vv\cdot \nn=0$
 on $\partial\E$, the result \eqref{orthograd2} follows then using a simple integration by parts.}
\end{proof}

\begin{lemma}\label{deco1}
If $\vv\in \Vfdk(\E)$ then there exist a $\qq_{k}^\perp$ in $\calG_k^\perp$  and a $\varphi\in H^1(\E)$
such that
\begin{equation}
\vv=\qq_k^\perp+\bgrad\varphi .
\end{equation}
\end{lemma}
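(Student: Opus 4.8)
The plan is to peel off the non-gradient part of $\vv$ by matching its rotation with a polynomial in $\calG_k^\perp$. First I would use the definition of $\Vfdk(\E)$, which guarantees $\rot\vv\in\P_{k-1}(\E)$. By property [\ref{ex1};iii)], $\rot$ is an isomorphism from $\calG_k^\perp$ onto the whole of $\P_{k-1}$, so there exists a (unique) polynomial $\qq_k^\perp\in\calG_k^\perp$ with $\rot\qq_k^\perp=\rot\vv$.

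Next I would set $\ww:=\vv-\qq_k^\perp$, so that by construction $\rot\ww=0$, and observe that $\ww\in(L^2(\E))^2$ since $\vv\in(L^2(\E))^2$ and $\qq_k^\perp$ is a polynomial. Because $\E$ is simply connected (by the standing assumptions), the Calculus fact recalled in Remark \ref{calculus} applies and yields a scalar field $\varphi$ with $\ww=\bgrad\varphi$. As $\bgrad\varphi=\ww\in(L^2(\E))^2$, the function $\varphi$ (determined up to an additive constant) belongs to $H^1(\E)$. Combining the two decompositions gives $\vv=\qq_k^\perp+\bgrad\varphi$ with $\qq_k^\perp\in\calG_k^\perp$ and $\varphi\in H^1(\E)$, which is the desired representation.

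The delicate point to handle carefully is the passage from the purely polynomial exact-sequence statement [\ref{ex1};ii)] — which only tells us that a \emph{polynomial} with vanishing $\rot$ is a polynomial gradient — to the present $H(\rot;\E)$ framework, where $\ww$ is merely an $L^2$ vector field. Here one cannot use the algebraic exactness of \eqref{gradrotex} directly; instead one must invoke the genuine analytic result on simply connected domains, namely that any $\ww\in(L^2(\E))^2$ with $\rot\ww=0$ in the distributional sense is the gradient of an $H^1(\E)$ function. The simple connectedness of $\E$ is exactly what legitimizes this step, and it is the only nontrivial ingredient in the argument.
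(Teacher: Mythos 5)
Your proof is correct and follows essentially the same route as the paper's: you use [\ref{ex1};iii)] to produce $\qq_k^\perp\in\calG_k^\perp$ with $\rot\qq_k^\perp=\rot\vv$, and then invoke the simple connectedness of $\E$ (Remark \ref{calculus}) to write the rot-free remainder $\vv-\qq_k^\perp$ as $\bgrad\varphi$ with $\varphi\in H^1(\E)$. Your closing observation, that this last step rests on the analytic version of the result for $L^2$ fields on simply connected domains rather than on the purely polynomial exactness of \eqref{gradrotex}, is a sound clarification of exactly what the paper's appeal to Remark \ref{calculus} is doing.
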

\begin{proof} We first note that according to \eqref{Vface2d} if  $\vv\in \Vfdk(\E)$ then
$\rot\vv\in\P_{k-1}$. Looking at [\ref{ex1};iii)]  we have then that
$\rot\vv=\rot\qq_{k}^\perp$ for some $\qq^{\perp}_{k}\in\calG_k^\perp$. Now the difference $\vv-\qq_k^\perp$
satisfies $\rot(\vv-\qq_k^\perp)=0$, and as $\E$ is simply connected the result follows
from Remark \ref{calculus}.
\end{proof}

We can now easily prove the following theorem.
\begin{theorem} The degrees of freedom \eqref{dof1}-\eqref{dof3} are unisolvent in $\Vfdk(\E)$.
\end{theorem}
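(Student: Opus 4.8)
The plan is to combine the two preliminary lemmas with the dimension count already in place. Because the number of conditions \eqref{dof1}--\eqref{dof3} has been shown to equal $\dim\Vfdk(\E)$, it suffices to verify that any $\vv\in\Vfdk(\E)$ killed by all the degrees of freedom, i.e. one satisfying \eqref{dof10}--\eqref{dof30}, is identically zero. So I would assume \eqref{dof10}--\eqref{dof30} and aim to show $\int_\E|\vv|^2\dE=0$.

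First I would apply Lemma \ref{deco1} to decompose $\vv=\qq_k^\perp+\bgrad\varphi$, with $\qq_k^\perp\in\calG_k^\perp$ and $\varphi\in H^1(\E)$. The strategy is then to test $\vv$ against each of the two summands of its own decomposition, using a different block of vanishing degrees of freedom for each. On the gradient part, since the hypotheses \eqref{dof10} and \eqref{dof20} hold, Lemma \ref{primi0} yields $\int_\E\vv\cdot\bgrad\psi\dE=0$ for every $\psi\in H^1(\E)$; choosing $\psi=\varphi$ gives $\int_\E\vv\cdot\bgrad\varphi\dE=0$. On the other part, condition \eqref{dof30} applies to any $\bg_k^\perp\in\calG_k^\perp$, and in particular to the admissible choice $\bg_k^\perp=\qq_k^\perp$, giving $\int_\E\vv\cdot\qq_k^\perp\dE=0$.

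Adding the two identities and using the decomposition produces
\[
\int_\E|\vv|^2\dE
=\int_\E\vv\cdot\qq_k^\perp\dE+\int_\E\vv\cdot\bgrad\varphi\dE
=0,
\]
whence $\vv=0$ and unisolvence follows.

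In truth there is no serious obstacle remaining at this stage: the two lemmas have already absorbed all the analytic content (the reconstruction of $\rot\vv$ from $\calG_k^\perp$ in Lemma \ref{deco1}, and the integration-by-parts argument forcing $\div\vv=0$ with $\vv\cdot\nn=0$ in Lemma \ref{primi0}). The only point deserving a moment of care is the bookkeeping that separates the roles of the three DOF families: the first two blocks are precisely what Lemma \ref{primi0} consumes to annihilate the gradient component, which then frees the third block \eqref{dof30} to annihilate the $\calG_k^\perp$ component. Once one notices that $\qq_k^\perp$ is itself a legitimate test function in \eqref{dof30}, the orthogonality computation closes immediately.
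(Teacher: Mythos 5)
Your argument is correct and is essentially identical to the paper's own proof: both decompose $\vv=\qq_k^\perp+\bgrad\varphi$ via Lemma \ref{deco1}, kill the $\bgrad\varphi$ term with Lemma \ref{primi0} (using \eqref{dof10}--\eqref{dof20}) and the $\qq_k^\perp$ term with \eqref{dof30}, and conclude $\int_{\E}|\vv|^2\dE=0$ exactly as in \eqref{mungi}. No gaps; the extra bookkeeping remarks you add are accurate but not needed.
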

\begin{proof} Assume that for a certain $\vv\in\Vfdk(\E)$ we have  \eqref{dof10}-\eqref{dof30}.
From Lemma \ref{deco1} we have $\vv= \qq_k^\perp+\grad\varphi$ for some $\qq\in{\color{dblue}\calG}_{k}^{\perp}$
and some $\varphi\in H^1(\E)$. Then
\begin{equation}\label{mungi}
\int_{\E}|\vv|^2\dE=\int_{\E}\vv\cdot(\qq_k^\perp+\bgrad\varphi)\dE=0
\end{equation}
since the first term is zero by \eqref{dof30} and the second term is zero by \eqref{dof10}-\eqref{dof20}
and Lemma \ref{primi0}.
\end{proof}

\begin{remark}\label{alternativedof} The degrees of freedom \eqref{dof1} are pretty obvious. A natural variant would be to use, on each edge $e$, the values of $\vv\cdot\nn$ at the $k+1$ Gauss points
on $e$.
On the other hand, for the degrees of freedom \eqref{dof2} we could integrate by parts,
and substitute them with
\begin{equation}
\int_{\E}\div\vv\,q_{k-1}\dE \qquad \mbox{for all } q_{k-1}\in\P_{k-1}/\R .
\end{equation}
Finally, the degrees of freedom \eqref{dof3} could be replaced by
\begin{equation}
\int_{\E}\rot\vv\,q_{k-1}\dE \qquad \mbox{for all } q_{k-1}\in\P_{k-1}
\end{equation}
as we had in the original work \cite{BFM-mixed}.
\end{remark}
\begin{remark}\label{otherdoff2} Needless to say, certain degrees of freedom will be more convenient
when writing the code, and others might be more convenient when writing a proof. For instance, from the above discussion it is pretty obvious that we can identify uniquely an element $\vv$ of $\Vfdk(\E)$
by prescribing its normal component
$\vv\cdot\nn$ (in $\P_k(e)$) on every edge, its rotation $\rot\vv$ (in $\P_{k-1}(\E)$), and its divergence
$\div\vv$ (in $(\P_{k-1}(\E))/{\R}$), provided the compatibility condition \eqref{compadd}
is satisfied. {\color{dblue} This will be convenient in some proof, but might be less convenient in the code.}
\end{remark}

% -------------------------------------------------
\subsection{Computing the $L^2$ projection}%%   FACES  2 D
\label{computing:face:2d}
% ------------------------------------------------

Since the VEM spaces contain functions which are not polynomials, and their reconstruction can be too hard,
for the practical use of a virtual element method it is {\color{dblue} often}
important to be able to compute different types of projections onto spaces of polynomials.
Here we show how to construct the one that is possibly the most convenient, and surely the most
 commonly used: the $L^2$ projection onto $(\P_{k}(\E))^2$.

For this, we begin by recalling that to assign
$\bgrad\div\vv\in\calG_{k-2}(\E)$ (as we do with our degrees of freedom \eqref{dof2} for
$\vv\in\Vfdk(\E)$), is equivalent to assign $\div\vv\in\P_{k-1}(\E)$ up to an additive constant. This constant
will be assigned by the integral of $\vv\cdot\nn$ over $\partial\E$, that can be deduced from the degrees of freedom \eqref{dof1}.
Indeed, using the same integration by parts applied in
\eqref{added:X1}, the degrees of freedom \eqref{dof1} and
\eqref{dof2} allow us to compute $\int_{\E}\div\vv\,q_{k-1}\dE$
for all $q_{k-1}\in\P_{k-1}(\E)$, and since $\div\vv\,\in\P_{k-1}(\E)$,
 we can compute exactly  the divergence of any $\vv\in\Vfdk(\E)$.
% ----
In turn this implies, again by using an integration by parts and
\eqref{dof1}, that we are able to compute also
$$
\int_{\E}{\vv\cdot \bg_{k}}\dE \quad \forall \bg_{k}\in\calG_{k} ,
$$
and actually
$$
\int_{\E}{\vv\cdot \bgrad \varphi}\dE \quad \forall \varphi \mbox{ polynomial on } \E .
$$

The above property, combined with \eqref{dof3}, allows to compute
the integrals against any $\qq_k \in (\P_{k}(\E))^2$ and thus
yields the following important result.

\begin{theorem}
The $L^2(\E)$ projection operator
$$
\Pi^0_{k} \: : \Vfdk(\E) \ \longrightarrow \ (\P_k(\E))^2
$$
is computable using the degrees of freedom \eqref{dof1}--\eqref{dof3}.
\end{theorem}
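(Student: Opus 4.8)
The plan is to show that, for every $\vv\in\Vfdk(\E)$ and every $\qq_k\in(\P_k(\E))^2$, the integral $\int_{\E}\vv\cdot\qq_k\dE$ can be evaluated knowing only the degrees of freedom \eqref{dof1}--\eqref{dof3}. Once this is granted, $\Pi^0_k\vv$ is obtained by solving the linear system $\int_{\E}\Pi^0_k\vv\cdot\qq_k\dE=\int_{\E}\vv\cdot\qq_k\dE$ for all $\qq_k\in(\P_k(\E))^2$: the left-hand side is a Gram matrix of polynomials, hence fully computable, so the whole operator becomes computable.

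The first step is to use the orthogonal decomposition $(\P_k(\E))^2=\calG_k\oplus\calG_k^\perp$ recalled in Section~\ref{nota}. Writing $\qq_k=\bg_k+\bg_k^\perp$ with $\bg_k\in\calG_k$ and $\bg_k^\perp\in\calG_k^\perp$, the contribution $\int_{\E}\vv\cdot\bg_k^\perp\dE$ is immediately one of the degrees of freedom \eqref{dof3}. Hence the only quantity still to be computed is $\int_{\E}\vv\cdot\bg_k\dE$ for $\bg_k\in\calG_k$.

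The central step is to reduce this gradient term to \eqref{dof1} and \eqref{dof2}. By [\ref{ex1};i)] every $\bg_k\in\calG_k$ is $\bg_k=\bgrad\varphi$ with $\varphi\in\P_{k+1}(\E)$, so integration by parts gives
\begin{equation*}
\int_{\E}\vv\cdot\bgrad\varphi\dE=\int_{\partial\E}(\vv\cdot\nn)\,\varphi\ds-\int_{\E}\div\vv\,\varphi\dE .
\end{equation*}
The boundary term is computable from \eqref{dof1}, since on each edge $\vv\cdot\nn\in\P_k(e)$ and the trace of $\varphi$ is a polynomial. For the volume term I would first establish that $\div\vv$ is \emph{fully} recoverable as an element of $\P_{k-1}(\E)$: the integration by parts already used in \eqref{added:X1} turns \eqref{dof2} into the values of $\int_{\E}\div\vv\,q_{k-1}\dE$ for all $q_{k-1}\in\P_{k-1}(\E)/\R$, while the missing mean value is supplied by $\int_{\E}\div\vv\dE=\int_{\partial\E}\vv\cdot\nn\ds$, computable from \eqref{dof1}. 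Since $\div\vv\in\P_{k-1}(\E)$ by \eqref{Vface2d}, these data determine $\div\vv$ completely, and $\int_{\E}\div\vv\,\varphi\dE$ is then an integral of two known polynomials.

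I expect the only genuine subtlety to be the recovery of the constant part of $\div\vv$: the degrees of freedom \eqref{dof2} test $\vv$ only against $\calG_{k-2}$, which after integration by parts leaves $\div\vv$ determined solely up to an additive constant, exactly as observed just before the theorem. Closing that gap through the boundary flux \eqref{dof1} is the single place where the edge and interior degrees of freedom must be combined; everything else is routine integration by parts together with the splitting $(\P_k(\E))^2=\calG_k\oplus\calG_k^\perp$.
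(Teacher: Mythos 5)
Your proposal is correct and follows essentially the same route as the paper's proof: recover $\div\vv$ completely from \eqref{dof2} via integration by parts, fixing the missing constant through the boundary flux $\int_{\partial\E}\vv\cdot\nn\ds$ from \eqref{dof1}, then compute $\int_{\E}\vv\cdot\bgrad\varphi\dE$ for polynomial $\varphi$ by a further integration by parts, and finally combine with \eqref{dof3} through the splitting $(\P_k(\E))^2=\calG_k\oplus\calG_k^\perp$. The only additions you make (spelling out that $\bg_k=\bgrad\varphi$ with $\varphi\in\P_{k+1}(\E)$, and the final Gram-matrix step defining $\Pi^0_k$) are implicit in the paper's argument.
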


{\color{dblue} \begin{remark}We point out that, for instance, the $(L^2(\E))^2$ projection would be
much more difficult to compute if we used the degrees of freedom discussed in Remark \ref{otherdoff2}.\end{remark}}

\subsection{ The global 2D-face space}%%   FACES  2 D
% --------------------------------------------------

Given a polygon $\Omega$ and a decomposition $\Th$ of $\Omega$ into a finite number of polygonal elements $\E$, we can now consider the {\it global} space
\begin{multline}\label{Vface2d-glo}
\Vfdk(\Omega):=\{\vv\in H(\div;\Omega)\cap H(\rot_h;\Omega)  \mbox{ s. t. } \vv\cdot\nn_{|e}\in\P_{k}(e)~\forall \mbox{ edge $e$ in } \Th,\\
\bgrad\div\vv\in\calG_{k-2}(\E), \mbox{ and } \rot\vv\in\P_{k-1}(E)~\forall \mbox{ element $\E$ in}
\Th\},
\end{multline}
where, with a common notation, $H(\rot_h;\Omega)$ is the space of vector valued functions $\vv$ in
$(L^2(\Omega))^2$ such that their $\rot$, {\it within each element} $\E$, belongs to $L^2(\E)$. In
other words
\begin{equation}\label{roth}
H(\rot_h;\Omega)=\displaystyle{\prod_{\E\in\Th} H(\rot;\E)}.
\end{equation}
Note that in \eqref{Vface2d-glo} we assumed that the elements $\vv$ of  $\Vfdk(\Omega)$ have a divergence that is {\it globally} (and not just element-wise)  in $L^2(\Omega)$. Hence  the normal component of vectors $\vv\in\Vfdk(\Omega)$ will have to be {``continuous''}
(with obvious meaning)  at the inter-element edges. From the local degrees of freedom \eqref{dof1}-\eqref{dof3} we deduce the global degrees of freedom:
\begin{eqnarray}
&\int_e{\vv\cdot\nn}\,{p}_{\,k}\,\de
&\quad \mbox{ for all edge $e$, for all }\;
p_{k}\in\P_k(e),\label{dof1glo}\\[3pt]
&\int_{\E}{\vv\cdot \bg_{k-2}}\dE &\quad\mbox{ for all element $\E$, for all  $\bg_{k-2}\in\calG_{k-2}(\E)$},  \label{dof2glo}
\\[3pt]
&\int_{\E}{\vv\cdot \bg_{k}^\perp}\dE  &\quad \mbox{ for all element $\E$, for all  $\bg_{k}^\perp\in\calG_{k}^\perp(\E)$}
.\label{dof3glo}
\end{eqnarray}
From the above discussion it follows immediately that the degrees of freedom \eqref{dof1glo}-\eqref{dof3glo} are unisolvent, and that the dimension of $\Vfdk(\Omega)$ is given by
\begin{equation*}
\begin{split}
\dim(\Vfdk(\Omega))=\ & \pi_{k,1}\times\{\text{number of edges in }\Th\} +\\ & (2\pi_{k-1,2}-1)\times\{\text{number of elements in }\Th\}.
\end{split}
\end{equation*}
%equals $\pi_{k,1}$ times the total number
%of edges in $\Th$ plus $2\pi_{k-1,2}-1$ times the total number of elements in $\Th$.
%

%%%%%%%%%%%%%%%%%%%%%%%%
%%%%%%%%%%%%%%%%%%%%%%%%
%%%%%%%%%%%%%%%%%%%%%%%%
%%%%%%%%%%%%%%%%%%%%%%%%
%
%
%
%
%
%
%
%
%%%%%%%%%%%%%%%%%%%%%%%%   EDGES 2 D
%
%
%
%
%
%
%
%
%%%%%%%%%%%%%%%%%%%%%%%%
%%%%%%%%%%%%%%%%%%%%%%%%
%%%%%%%%%%%%%%%%%%%%%%%%
%%%%%%%%%%%%%%%%%%%%%%%%

% -------------------------------------------------
\section{2D Edge Elements}\label{sec:edge:2D} %%%%   EDGES 2 D
% ------------------------------------------------

The edge elements in 2D exactly correspond to the face elements, just rotating everything by $\pi/2$.
For the sake of completeness we just recall the definition of the spaces and the
corresponding degrees of freedom.

% -------------------------------------------------
\subsection {The local space} %%%%   EDGES 2 D
% -------------------------------------------------

On a polygon $\E$ we set
\begin{multline}\label{L:2Dedgespace}
\Vedk(\E):=\{\vv\in H(\div;\E)\cap H(\rot;\E):
\vv\cdot\tt_{|e}\in\P_{k}(e)\,\forall \mbox{ edge $e$ of } \E,\\
\brot\rot\vv\in\P_{k-2}(E), \mbox{ and }
\div\vv\in\P_{k-1}(\E) \}.
\end{multline}

% -------------------------------------------------
\subsection{The Degrees of Freedom} %%%%   EDGES 2 D
% -------------------------------------------------

{\color{dblue} A convenient set of degrees of freedom for elements  $\vv$  in $\Vedk(\E)$ will be:}
\begin{eqnarray}
&\int_e{\vv\cdot\tt}\,p_{k}\,\de
&\quad \mbox{ for all edge $e$, for all }\;
p_{k}\in\P_k(e),\label{doe1}\\[3pt]
&\int_{\E}{\vv\cdot\rr_{k-2}}\dE  &\quad \mbox{ for all $\rr_{k-2}\in\calR_{k-2}$}
,\label{doe3}
\\[3pt]
&\int_{\E}{\vv\cdot \rr^{\perp}_{k}}\dE &\quad \mbox{ for all } \rr^{\perp}_{k}\in\calR^{\perp}_{k}.  \label{doe2r}
\end{eqnarray}

\begin{remark}\label{otherdofe2} Here too we could use alternative degrees of freedom,
in analogy with the ones discussed in Remarks \ref{alternativedof}  and \ref{otherdoff2}.
In particular we point out that we can identify uniquely an element $\vv$ of $\Vedk(\E)$
by prescribing its tangential component
$\vv\cdot\tt$ (in $\P_k(e)$) on every edge, its rotation $\rot\vv$ (in $(\P_{k-1}(\E))/{\R}$), and its divergence
$\div\vv$ (in $\P_{k-1}(\E)$).
\end{remark}
\begin{remark} Obviously, here too we can define the $L^2-$projection onto $\P_k$, exactly as we did in subsection \ref{computing:face:2d}, with $\calR_k^\perp$ taking the role of $\calG_k^\perp$.
\end{remark}

\subsection{ The global 2D-edge space } %%%%   EDGES 2 D
% --------------------------------------------------

Given a polygon $\Omega$ and a decomposition $\Th$ of $\Omega$ into a finite number of polygonal elements $\E$, we can now consider the {\it global} space
\begin{multline}\label{Vedge2d-glo}
\Vedk(\Omega):=\{\vv\in H(\div_h;\Omega)\cap H(\rot;\E)  \mbox{ s. t. } \vv\cdot\tt_{|e}\in\P_{k}(e)~\forall \mbox{ edge $e$ in } \Th,\\
\div\vv\in\P_{k-1}(\E), \mbox{ and } \brot\rot\vv\in\calR_{k-2}(\E)~\forall \mbox{ element $\E$ in}
\Th\},
\end{multline}
where, with a notation similar to that used in \eqref{roth}, we have here
\begin{equation}\label{divh} H(\div_h;\Omega)=\displaystyle{\prod_{\E\in\Th} H(\div;\E)}.
\end{equation}

Note that the tangential component of vectors $\vv\in\Vedk(\Omega)$ will have to be {``continuous''}
(with obvious meaning)  at the inter-element edges. From the local degrees of freedom \eqref{doe1}-\eqref{doe2r} we deduce the global degrees of freedom:
\begin{eqnarray}
&\int_e{\vv\cdot\tt}\,{p}_{\,k}\,\de
&\quad \mbox{ for all edge $e$, for all }\;
p_{k}\in\P_k(e),\label{doe1glo}\\[3pt]
&\int_{\E}{\vv\cdot \rr_{k-2}}\dE &\quad\mbox{ for all element $\E$, for all  $\rr_{k-2}\in\calR_{k-2}(\E)$},  \label{doe3glo}
\\[3pt]
&\int_{\E}{\vv\cdot \rr_{k}^\perp}\dE  &\quad \mbox{ for all element $\E$, for all  $\rr_{k}^\perp\in\calR_{k}^\perp(\E)$}
.\label{doe2glor}
\end{eqnarray}
From the above discussion it follows immediately that the degrees of freedom \eqref{doe1glo}-\eqref{doe2glor} are unisolvent, and that the dimension of $\Vedk(\Omega)$ is
\begin{equation*}
\begin{split}
\dim(\Vedk(\Omega))=\ &\pi_{k,1}\times\{\text{number of edges in }\Th\} + \\ &(2\pi_{k-1,2}-1)\times\{\text{number of elements in }\Th\}.
\end{split}
\end{equation*}
%
%equals $\pi_{k,1}$ times the total number
%of edges in $\Th$ plus $2\pi_{k-1,2}-1$ times the total number of elements in $\Th$.
%

%%%%%%%%%%%%%%%%%%%%%%%%
%%%%%%%%%%%%%%%%%%%%%%%%
%%%%%%%%%%%%%%%%%%%%%%%%
%%%%%%%%%%%%%%%%%%%%%%%%
%
%
%
%
%
%
%
%
%%%%%%%%%%%%%%%%%%%%%%%%   FACES 3 D
%
%
%
%
%
%
%
%
%%%%%%%%%%%%%%%%%%%%%%%%
%%%%%%%%%%%%%%%%%%%%%%%%
%%%%%%%%%%%%%%%%%%%%%%%%
%%%%%%%%%%%%%%%%%%%%%%%%

% ---------------------------------------------------
\section{3D Face Elements} %%%%   FACES 3 D
% ---------------------------------------------------

The three-dimensional $H(\div)$-conforming spaces follow in a very natural
way the path of their two-dimensional companions.

\subsection{The local space.} 

On a polyhedron $\PP$ we set
\begin{multline}\label{def:face:space:3D}
\Vftk(\PP):=\{\vv\in H(\div;\PP)\cap H(\bcurl;\PP)  \mbox{ s. t. } \vv\cdot\nn_{\PP}^f\in\P_k(f)\,\forall \mbox{ face $f$ of } \PP,\\
%\mbox { and }
\bgrad\div\vv\in\calG_{k-2}(\PP),
%\mbox{ and }
~\bcurl\vv\in\calR_{k-1}(\PP)\}.
\end{multline}

% ---------------------------------------------------
\subsection{Dimension of the space $\Vftk(\PP)$}   %%%%   FACES 3 D
% ---------------------------------------------------

{\color{dgreen}We recall from the introduction that given}
 \begin{itemize}
\item a function $g$ defined on $\partial\PP$
such that $g_{|f}\in\P_{k}(f)$ for all $f \in \partial E$,
\item a polynomial $f_d\in\P_{k-1}(\PP)$ such that
\begin{equation}\label{compat3d}
\int_{\PP}f_d\dP=\int_{\partial\PP} g \dS ,
\end{equation}
\item a vector valued polynomial $\bbf_r\in\calR_{k-1}(\PP)$ ,
\end{itemize}
we can find a unique vector $\vv\in\Vftk(\PP)$ such that
\begin{equation}\label{perdimftd}
\vv\cdot\nn=g \mbox{ on }\partial\PP, \quad \div\vv=f_d\mbox{ in }\PP, \quad \bcurl\vv=\bbf_r\mbox{ in }\PP.
\end{equation}
 This easily implies that the dimension of $\Vftk(\PP)$ is given by:
{\it the number of faces, $\ell_f$, times the dimension of $\P_{k}$ (in $\R^2$)}, plus {\it the dimension of
$\P_{k-1}(\PP)$ minus one (to take into account the compatibility condition \eqref{compat3d})}
 plus {\it the dimension of  $\calR_{k-1}(\PP)$}, that is
\begin{equation}\label{dimftd}
\dim(\Vftk(\PP))=\ell_f\pi_{k,2}+\gamma_{k-2,3}+\rho_{k-1,3}.
\end{equation}

% -------------------------------------------------
\subsection{The Degrees of Freedom}   %%%%   FACES 3 D
% -------------------------------------------------

The degrees of freedom  will be:
\begin{eqnarray}
&\int_f{\vv\cdot\nn_{\PP}^f}\,p_{k}\,\df
&\quad \mbox{ for all face $f$, for all }\;
p_{k}\in\P_k(f),\label{dof31}\\[3pt]
&\int_{\PP}{\vv\cdot\bg_{k-2}}\dP &\quad\mbox{ for all $\bg_{k-2}\in\calG_{k-2}$},  \label{dof32d}
\\[3pt]
&\int_{\PP}{\vv\cdot\bg^{\perp}_{k}}\dP  &\quad \mbox{ for all $\bg^{\perp}_{k}\in\calG^{\perp}_{k}$}
.\label{dof33}
\end{eqnarray}
\color{black}
It is not difficult to check, using \eqref{dimG} and \eqref{dimGperp3}, that the number of the above
degrees of freedom is given by
\begin{equation}
\ell_f\pi_{k,2}+\dim\{\calG_{k-2}\}+\dim\{\calG^{\perp}_{k}\}=\ell_f\pi_{k,2}+\gamma_{k-2,3}+\rho_{k-1,3},
\end{equation}
which equals the dimension of $\Vftk(\PP)$ as given in \eqref{dimftd}.

% -------------------------------------------------
\subsection{Unisolvence}  %%%%   FACES 3 D
% -------------------------------------------------

Having already noticed that the number of degrees of freedom
\eqref{dof31}-\eqref{dof33} equals the dimension of $\Vftk(\PP)$, we just have to show that
 if a $\vv\in\Vftk(\PP)$ verifies
\begin{eqnarray}
&\int_f{\vv\cdot\nn_{\PP}^f}\,p_{k}\,\df=0
&\quad \mbox{ for all face $f$, for all }\;
p_{k}\in\P_k(f),\label{dof310}\\[3pt]
&\int_{\PP}{\vv\cdot\bg_{k-2}}\dP=0 &\quad\mbox{ for all $\bg_{k-2}\in\calG_{k-2}$},  \label{dof320}
\\[3pt]
&\int_{\PP}{\vv\cdot\bg^{\perp}_{k}}\dP=0  &\quad \mbox{ for all $\bg^{\perp}_{k}\in\calG^{\perp}_{k}$},
\label{dof330}
\end{eqnarray}
then $\vv=0$. We proceed as in the two dimensional case. For this  we observe first that
if $\vv\in \Vftk(\PP)$ and if \eqref{dof310} and \eqref{dof320} hold, then
\begin{equation}\label{orthograd3}
\int_{\PP} \vv\cdot\bgrad\varphi\dP=0\quad
\forall\varphi\in H^1(\PP).
\end{equation}
The proof is identical to that of Lemma \ref{primi0}. Then we observe that for all  $\vv\in \Vftk(\PP)$ there exist a $\qq_{k}^\perp$ in $\calG_k^\perp$  and a $\varphi\in H^1(\PP)$
such that
\begin{equation}\label{anche3d}
\vv=\qq_k^\perp+\grad\varphi .
\end{equation}
Again the proof is identical to that of Lemma \ref{deco1}, this time using [\ref{ex3};iv)] to show the existence of
a $\qq_{k}^\perp\in\calG_k^\perp$ such that $\bcurl(\vv-\qq_k^{\perp})=0$. Then using \eqref{anche3d} we conclude that $\vv=0$ as in \eqref{mungi}.
\begin{remark}\label{alternativedof3f} As we did in the $2D$ case, we point out that the degrees
of freedom \eqref{dof32d} or \eqref{dof33} could be replaced by equivalent ones. In particular, the degrees of freedom \eqref{dof32d} can again be replaced by
\begin{equation}
\int_{\PP}\div\vv\,q_{k-1}\dP \qquad \mbox{for all } q_{k-1}\in\P_{k-1}/\R,
\end{equation}
and the degrees of freedom \eqref{dof33} could be substituted by
\begin{equation}
\int_{\PP}\bcurl\vv\,\qq_{k-1}\dP \qquad \mbox{for all } \qq_{k-1}\in\calR_{k-1}.
\end{equation}
\end{remark}
{\color{dred}
\begin{remark} Obviously, here too we can {\color{dblue} compute} the $L^2-$projection onto $\P_k$, exactly as we did in subsection \ref{computing:face:2d}.
\end{remark}
}
\begin{remark}\label{otherdoff3} In the same spirit of Remark \ref{otherdoff2}, we point out that we can identify uniquely an element $\vv$ of $\Vftk(\E)$
by prescribing its normal component
$\vv\cdot\nn$ (in $\P_k(f)$) on each face, its rotation $\bcurl\vv$ (in $\calR_{k-1}(\E)$), and its divergence
$\div\vv$ (in $(\P_{k-1}(\E))/{\R}$).
\end{remark}

{\color{black}
\subsection{The global 3D-face space}  %%%%   FACES 3 D

Having now a polyhedron $\Omega$ and a decomposition $\Th$  of $\Omega$ into a finite number of
polyhedral elements $\PP$, we can  consider the global space:{\color{dblue}
\begin{equation}\label{def:face:space:3D-glo}
\begin{aligned}
\Vftk(\Omega):= \{ & \vv\in H(\div;\Omega)\cap H(\bcurl_h;\Omega)  \mbox{ such that: }\\ & \vv\cdot\nn_{\PP}^f\in\P_k(f)\,\forall \mbox{ face $f$ in } \Th,
%\mbox { and }
\;\bgrad\div\vv\in\calG_{k-2}(\PP),\\
& \mbox{ and }
~\bcurl\vv\in\calR_{k-1}(\PP)\,\forall \mbox{ element $\PP$ in } \Th \},
\end{aligned}
\end{equation}
\noindent with obvious notation (in agreement with \eqref{roth} and \eqref{divh})
for the operator $\bcurl_h$ and the corresponding space $H(\bcurl_h;\Omega)$.
As we did for the 2D case, we note that the normal component of the elements of $\Vftk(\Omega)$ will be
``continuous'' at the inter-element face. In $\Vftk$ we can take, as degrees of freedom:
\begin{eqnarray}
&\int_f{\vv\cdot\nn_{\PP}^f}\,p_{k}\,\df
&\quad \mbox{ for all face $f\in\Th$, for all }\;
p_{k}\in\P_k(f),\label{dof31glo}\\[3pt]
&\int_{\PP}{\vv\cdot\bg_{k-2}}\dP &\quad\mbox{ for all element $\PP\in\Th$, for all $\bg_{k-2}\in\calG_{k-2}(\PP)$}, \quad  \label{dof32dglo}
\\[3pt]
&\int_{\PP}{\vv\cdot\bg^{\perp}_{k}}\dP  &\quad \mbox{ for all element $\PP\in\Th$, for all $\bg^{\perp}_{k}\in\calG^{\perp}_{k}(\PP)$}
.\label{dof33glo}
\end{eqnarray}
From the above discussion it follows immediately that the degrees of freedom \eqref{dof31glo}-\eqref{dof33glo} are unisolvent, and that the dimension of $\Vftk(\Omega)$ is
\begin{equation*}
\begin{split}
\dim(\Vftk(\Omega))= &\ \pi_{k,2}\times\{\text{number of faces in }\Th\} + \\ &(\pi_{k-1,3}-1+\rho_{k-1,3})\times\{\text{number of elements in }\Th\}.
\end{split}
\end{equation*}
%
%equals $\pi_{k,2}$ times the total number
%of faces in $\Th$ plus $\{\pi_{k-1,3}-1+\rho_{k-1,3}\}$ times the total number of elements in $\Th$.
%
%
}

%%%%%%%%%%%%%%%%%%%%%%%%
%%%%%%%%%%%%%%%%%%%%%%%%
%%%%%%%%%%%%%%%%%%%%%%%%
%%%%%%%%%%%%%%%%%%%%%%%%
%
%
%
%
%
%
%
%
%%%%%%%%%%%%%%%%%%%%%%%%   EDGES 3 D
%
%
%
%
%
%
%
%
%%%%%%%%%%%%%%%%%%%%%%%%
%%%%%%%%%%%%%%%%%%%%%%%%
%%%%%%%%%%%%%%%%%%%%%%%%
%%%%%%%%%%%%%%%%%%%%%%%%

% ----------------------------------------------
\section{3D Edge Elements}  %%%%   EDGES 3 D
% ----------------------------------------------

This time we cannot just rotate the $3D$-face case. However we can get some inspiration.
We recall, from the very beginning, the Green formula:
\begin{equation}\label{greenrot}
\int_{\PP}\bcurl\bpsi\cdot\bphi\dP=\int_{\PP}\bpsi\cdot\bcurl\bphi\dP
+\int_{\partial\PP}\bpsi\cdot(\bphi\wedge\nn)\,{\rm d}S,
\end{equation}
as well as
\begin{equation}\label{greenrotrot}
\int_{\PP}\bcurl\bpsi\cdot\bcurl\bphi\dP=\int_{\PP}\bpsi\cdot\Big[-\Delta\bphi
+\bgrad\div\bphi\Big]\dP+\int_{\partial\PP}\bpsi\cdot(\bcurl\bphi\wedge\nn)\,{\rm d}S.
\end{equation}
We also recall the observation that we made in Section \ref{nota}  concerning the
difference between  $\bphi\wedge\nn_f$ and   $\bphi_f$.
We introduce moreover the following space.

\begin{definition}\label{smooth-bound-space}
We define the boundary space $\calB(\partial\PP)$ as
the space of $\vv$ {\color{dblue}in} $(L^2(\partial\PP))^3$ such that $\vv_{f}\in {\color{dblue} H(\div;f)\cap H(\rot;f)}${ on each face }
$f\in\partial\PP$, and such that on each edge $e$ (common to the faces $f_1$ and $f_2$),  $\vv_{f_1}\cdot{\bf t}_e$ and $\vv_{f_2}\cdot{\bf t}_e$ (where ${\bf t}_e$ is a unit tangential vector to $e\,$) coincide . Then we define $\calB_t(\partial\PP)$ as the space
of the tangential components of the elements of $\calB(\partial\PP)$.
\end{definition}

\begin{definition}\label{def:boundspace}
We now define the boundary VEM space $B_k^{\rm edge}(\partial\PP)$ as
% the subspace of $\calB_t(\partial\PP)$
$$
B_k^{\rm edge}(\partial\PP) = \big\{ \vv \in \calB_t(\partial\PP) \textrm{ such that } \vv_{f}\in\Vedk(f) \textrm{ on each face }
f\in\partial\PP \big\}.
$$
\end{definition}

Recalling the previous discussion on the two-dimensional virtual elements $\Vedk(f)$, we can easily see
that for a polyhedron with $\ell_e$ edges and $\ell_f$ faces the dimension
$\beta_k$ of $B_k^{\rm edge}(\partial\PP)$ is given by
\begin{equation}\label{defbetk}
\beta_k=\ell_e\pi_{k,1}+\ell_f~(2\pi_{k-1,2}-1).
\end{equation}

% --------------------------------------------------
\subsection{ The local space}   %%%%   EDGES 3 D
% --------------------------------------------------

On a polyhedron $\PP$ we set{\color{dblue}
\begin{multline}\label{defVetkP}
\Vetk(\PP):=\{\vv|~\vv_{t}\in B_k^{\rm edge}(\partial\PP),
\div\vv\in\P_{k-1}(\PP), \mbox{ and } \bcurl\bcurl\vv\in\calR_{k-2}(\PP)\}.
\end{multline}}

% ---------------------------------------------------------
\subsection{Dimension of the space  $\Vetk(\PP)$}  %%%%   EDGES 3 D
% ---------------------------------------------------------

%We want to show that for every given set of degrees of freedom in \eqref{doe31e}-\eqref{doe35e} we have at least an element in $\Vetk(\PP)$ having the prescribed d.o.f.

We start by {\color{dgreen} observing } that,  given a vector $\bg$ in $B_k^{\rm edge}(\partial\PP)$, a function $f_d$ in $\P_{k-1}$, and
a vector ${\bbf}_r\in\calR_{k-2}(\PP)$ we can find a unique $\vv$ in $\Vetk(\PP)$ such that
\begin{equation}\label{esiedge}
\mbox{$\vv_{\tg}=\bg$ on $\partial\PP$, $\div\vv=f_d$ in $\PP$, and $\bcurl\bcurl\vv={\bbf}_r$ in $\PP$}.
\end{equation}

{\color{dgreen}
To prove it  we consider the following  auxiliary problems.
The first is: find $\HH$ in $(H^1(\PP))^3$ such that
\begin{equation}\label{defH}
\mbox{  $\bcurl\HH=\bbf_r$ in $\PP$,
$\div\HH=0$ in $\PP$, and $\HH\cdot\nn=\rot_2\bg$ on $\partial\PP$},
\end{equation}
that is uniquely solvable since
\begin{equation}
\int_{\partial\PP}\rot_2\bg\dS=0.
\end{equation}
The second is: find $\bpsi$ in $(H^1(\PP))^3$ such that
\begin{equation}\label{defpsi}
\mbox{  $\bcurl\bpsi=\HH$ in $\PP$,
$\div\bpsi=0$ in $\PP$, and $\bpsi_t=\bg$ on $\partial\PP$},
\end{equation}
that is also uniquely solvable since
\begin{equation}
\HH\cdot\nn=\rot_2\bg .
\end{equation}
The third problem is: find $\varphi\in H^1_0(\PP)$ such that:
\begin{equation}
\mbox{ $\Delta\varphi=f_d$ in $\PP$},
\end{equation}
that also has a unique solution.} Then it is not difficult to see that the choice
\begin{equation}\label{sole3d}
\vv:=\bpsi+\bgrad\varphi
\end{equation}
solves our problem. Indeed, it is clear that $(\bgrad \bphi)_{\tg}=0$, that $\div (\bgrad \bphi)=f_d$ and that $\bcurl\bcurl(\bgrad \bphi)=0$; all these, added to \eqref{defH} and \eqref{defpsi}, produce the right conditions. It is also clear
that the solution $\vv$ of \eqref{esiedge} is unique.

Hence we can conclude that the dimension of $\Vetk(\PP)$ is given by
\begin{equation}\label{dimVetk}
\dim(\Vetk(\PP))=\beta_k+\pi_{k-1,3}+\rho_{k-2,3} .
\end{equation}

% ----------------------------------------------
\subsection{The Degrees of Freedom.}   %%%%   EDGES 3 D
% ----------------------------------------------
A possible set of degrees of freedom will be:
\begin{itemize}
\item for every edge $e$:
\end{itemize}
\vskip-0.8truecm
\begin{equation}\int_e{\vv\cdot\tt}\,p_{k}\,\de
\quad \mbox{ for all }\;
p_{k}\in\P_k(e),\label{doe13}
\end{equation}
\begin{itemize}
\item for every face $f$:
\end{itemize}
\vskip-0.8truecm
\begin{eqnarray}
&\int_{f}{\vv\cdot \rr^{\perp}_{k}}{\color{dblue}\df }\quad \mbox{ for all } \rr^{\perp}_{k}\in\calR^{\perp}_{k}(f),  \label{doe23r}
\\[3pt]
&\int_{f}{\vv\cdot\rr_{k-2}}{\color{dblue}\df } \quad \mbox{ for all $\rr_{k-2}\in\calR_{k-2}(f)$},
\label{doe33e}
\end{eqnarray}
\begin{itemize}
\item and inside $\PP$
\end{itemize}
\vskip-0.8truecm
\begin{eqnarray}
&\int_{\PP}{\vv\cdot\rr^{\perp}_{k}}\dP  \quad \mbox{ for all }\rr^{\perp}_{k}\in
\calR^{\perp}_{k},
\label{doe34ec}
\\[3pt]
&\int_{\PP}{\vv\cdot\rr_{k-2}}\dP  \quad \mbox{ for all $\rr_{k-2}\in\calR_{k-2}$}.
\label{doe35e}
\end{eqnarray}
The total number of degrees of freedom \eqref{doe13}-\eqref{doe33e} is clearly equal to $\beta_k$ {\color{dblue} as given in \eqref{defbetk}}
and the number of degrees of freedom \eqref{doe35e} is equal to $\rho_{k-2,3}$. On the other hand, using [\ref{ex3};v)]
we see that the number of degrees of freedom \eqref{doe34ec} is equal to $\pi_{k-1,3}$, so that the total number
of degrees of freedom \eqref{doe13}-\eqref{doe35e} is equal to the dimension of $\Vetk(\PP)$ as computed in \eqref{dimVetk}.

% ---------------------------------------------------------
\subsection{Unisolvence.}  %%%%   EDGES 3 D
% ---------------------------------------------------------
Having seen that the number of degrees of freedom \eqref{doe13}-\eqref{doe35e} equals the dimension of
$\Vetk(\PP)$, in order to see their unisolvence we only need to check that a vector $\vv\in\Vetk(\PP)$
that satisfies
\begin{eqnarray}
&\int_e{\vv\cdot\tt}\,p_{k}\,\de=0
&\quad {\color{dblue}\forall}\mbox{ edge $e$ of $\PP$ and ${\color{dblue}\forall}$}\,
p_{k}\in\P_k(e),\label{doe130}\\[3pt]
&\int_{f}{\vv\cdot \rr^{\perp}_{k}}{\color{dblue}\df}=0 &\quad {\color{dblue}\forall}\mbox{ face $f$ of $\PP$ and ${\color{dblue}\forall}$  $\rr^{\perp}_{k}\in\calR^{\perp}_{k}(f)$},  \label{doe23r0}
\\[3pt]
&\int_{f}{\vv\cdot\rr_{k-2}}{\color{dblue}\df}=0  &\quad {\color{dblue}\forall}\mbox{ face $f$ of $\PP$ and ${\color{dblue}\forall}$  $\rr_{k-2}\in\calR_{k-2}(f)$},
\label{doe33e0}\\[3pt]
&\int_{\PP}{\vv\cdot\rr^{\perp}_{k}}\dP =0 &\quad {\color{dblue}\forall}\mbox{ }\rr^{\perp}_{k}\in
\calR^{\perp}_{k}(\PP),
\label{doe34e0}
\\[3pt]
&\int_{\PP}{\vv\cdot\rr_{k-2}}\dP=0 &\quad {\color{dblue}\forall}\mbox{ $\rr_{k-2}\in\calR_{k-2}(\PP)$},
\label{doe35e0}
\end{eqnarray}
is necessarily equal to zero.

Actually,  recalling the results of Section \ref{sec:edge:2D}, it is pretty obvious that \eqref{doe130}-\eqref{doe33e0} imply that $\vv_{\tg}=0$ on
$\partial\PP$. Moreover, since $\bcurl\bcurl\vv \in \calR_{k-2}(\PP)$, we are allowed to take
$\rr_{k-2} = \bcurl\bcurl\vv$ as a test function in \eqref{doe35e0}.
An integration by parts (using $\vv_{\tg}=0$) gives
\begin{equation}\label{L:int_curl}
0 = \int_\PP \vv \cdot \bcurl\bcurl\vv \dP
= \int_\PP (\bcurl\vv)\cdot (\bcurl\vv) \dP
\end{equation}
and therefore we get $\bcurl\vv=0$. Using this, and again $\vv_{\tg}=0$, we easily check, integrating
by parts, that
\begin{equation}\label{ortaicurl}
\int_{\PP}\vv\cdot\bcurl\bphi\dP=0\quad\forall\bphi\in (H^1(\PP))^3.
\end{equation}
Now we recall that from the definition \eqref{defVetkP} of $\Vetk(\PP)$ we have that $\div\vv$ is in $\P_{k-1}$. From
[(\ref{ex3});v] we then deduce that there exists a $\qq_k^{\perp}\in\calR_k^\perp$ with $\div\qq_k^{\perp}=\div\vv$,
so that the divergence of $\vv-\qq_k^{\perp}$ is zero, and then (since $\PP$ is simply connected)
\begin{equation}\label{isacurl}
\vv-\qq_k^{\perp}=\bcurl\bphi
\end{equation}
for some {\color{dblue}$\bphi\in H(\bcurl;\PP)$}. At this point we can use \eqref{ortaicurl} and \eqref{isacurl} to conclude as in
\eqref{mungi}
{\color{dblue}\begin{multline}
\int_{\PP}|\vv|^2\dP=\int_{\PP}\vv\cdot(\qq_k^\perp+\bcurl\bphi)\dP=\int_{\PP}\vv\cdot\qq_k^\perp\dP+
\int_{\PP}\vv\cdot\bcurl\bphi \dP=0.
\end{multline}}

%-----------------------------------------------------------------------------------------
\subsection{Alternative degrees of freedom}  %%%%   EDGES 3 D
%-----------------------------------------------------------------------------------------
 As we did in the previous cases, we observe that the degrees of freedom \eqref{doe13}-\eqref{doe35e}
are not (by far) the only possible choice. To start with, we can change the degrees of freedom
in each face, according to Remark \ref{alternativedof}. Moreover, in the spirit of \eqref{esiedge} we
could assign, instead of \eqref{doe34ec} and/or \eqref{doe35e},  $\bcurl\bcurl\vv$ in $\calR_{k-2}(\PP)$
and/or $\div\vv$ in $\P_{k-1}(\PP)$, respectively.

\subsection{The global 3D-edge space} Here too we can assume that we have a polyhedral
domain $\Omega$ and its decomposition $\Th$ in a finite number of polyhedra $\PP$. In this case we can define the global space
{\color{dblue}
\begin{multline}\label{defVetkPglo}
\Vetk(\Omega):=\{\vv\in H(\div_h;\Omega)\cap H(\bcurl;\Omega)\mbox{ s. t. }
\forall  \PP\in\Th
 \mbox{ we have: }\\~\vv_{t}\in B_k^{\rm edge}(\partial\PP),\,
\div\vv\in\P_{k-1}(\PP), \mbox{ and } \bcurl\bcurl\vv\in\calR_{k-2}(\PP)\}.
\end{multline}}
Accordingly, we could take, as degrees of freedom:
\begin{itemize}
\item for every edge $e$ in $\Th$:
\end{itemize}
% \vskip-0.2truecm
\begin{equation}\int_e{\vv\cdot\tt}\,p_{k}\,\de
\quad \mbox{ for all }\;
p_{k}\in\P_k(e),\label{doe13glo}
\end{equation}
\begin{itemize}
\item for every face $f$ in $\Th$:
\end{itemize}
% \vskip-0.2truecm
\begin{eqnarray}
&\int_{f}{\vv_f\cdot \rr^{\perp}_{k}}\df \quad \mbox{ for all } \rr^{\perp}_{k}\in\calR^{\perp}_{k}(f),  \label{doe23rglo}
\\[3pt]
&\int_{f}{\vv_f\cdot\rr_{k-2}}\df  \quad \mbox{ for all $\rr_{k-2}\in\calR_{k-2}(f)$}
\label{doe33eglo}
\end{eqnarray}
\begin{itemize}
\item and for every element $\PP$ in $\Th$
\end{itemize}
% \vskip-0.7truecm
\begin{eqnarray}
&\int_{\PP}{\vv\cdot\rr^{\perp}_{k}}\dP  \quad \mbox{ for all }\rr^{\perp}_{k}\in
\calR^{\perp}_{k},
\label{doe34ecglo}
\\[3pt]
&\int_{\PP}{\vv\cdot\rr_{k-2}}\dP  \quad \mbox{ for all $\rr_{k-2}\in\calR_{k-2}$}.
\label{doe35eglo}
\end{eqnarray}
From the above discussion it follows immediately that the degrees of freedom \eqref{doe13glo}-\eqref{doe35eglo} are unisolvent, and that the dimension of $\Vetk(\Omega)$ is
\begin{equation}
\begin{aligned}
\dim(\Vetk(\Omega))= & \pi_{k,1}\times\{\text{number
of edges in } \Th\} \\
& + (2\pi_{k-1,2}-1)\times \{\text{number of faces in } \Th\}\\
& +(\pi_{k-1,3}+\rho_{k-1,3})\times \{\text{number of elements in } \Th\}.
\end{aligned}
\end{equation}

%equals {\it $\pi_{k,1}$ times the total number
%of edges in $\Th$ } plus {\it $2\pi_{k-1,2}-1$ times the total number of faces in $\Th$}
%plus {\it $\{\pi_{k-1,3}+\rho_{k-1,3}\}$ times the total number of elements in $\Th$}.
%%

% ---------------------------------------------------------
\subsection{An enhanced edge space}  %%%%   EDGES 3 D
% ---------------------------------------------------------

It is immediate to check that the degrees of freedom \eqref{doe34ec}-\eqref{doe35e}  allow to compute the moments of $\vv\in\Vetk(\PP)$ up to order $k-2$.
Nevertheless, in order to be able to compute the $L^2(\PP)$ projection operator on the space
$(\P_{k}(\PP))^3$ we need to be able to compute the moments up to order $k$.
In the present section, in the spirit of \cite{projectors}, we will introduce an enhanced space $\Wetk(\PP)$ with the additional property that the $L^2$ projector on $(\P_{k}(\PP))^3$ is computable.

We consider the larger virtual space
{\color{dblue}
\begin{multline}
\Vettk(\PP) :=
\{\vv|~\vv_{|\partial\PP}\in B_k^{\rm edge}(\partial\PP),
\div\vv\in \P_{k-1}(\PP),
\mbox{ and }
\,\bcurl\bcurl\vv\in\calR_{k}(\PP)\}.
\end{multline}}
Following the same identical arguments used in the previous section and introducing the space
$$
\Rort(\PP) :=\big\{ \qq_k \in \calR_{k} \: : \: \int_\PP \qq_k \cdot \rr_{k-2} \dP = 0 \
\forall \rr_{k-2} \in \calR_{k-2} \big\},
$$
it is immediate to check that \eqref{doe13}-\eqref{doe35e}, with the addition of
\begin{equation}\label{doe36}
\int_{\PP}{\vv\cdot \qq_{k}}\dP  \quad \mbox{ for all $\qq_{k}
\in \Rort(\PP)$} ,
\end{equation}
constitute a set of degrees of freedom for $\Vettk(\PP)$. Note moreover that $\Vetk(\PP)$ {\color{dblue}is a subset of} $\Vettk(\PP)$ and that the combination of \eqref{doe34ec}, \eqref{doe35e} and \eqref{doe36} allows, for any function in $\Vettk(\PP)$, to compute all the integrals against polynomials in $\P_k(\PP)$. Therefore the $L^2$ projection operator
$$
\Pi^0_k \: : \: \Vettk(\PP) \rightarrow \Big( \P_{k}(\PP) \Big)^3
$$
is computable.

For the time being we \emph{assume} the existence of a projection operator
\begin{equation}\label{L:Pit}
\Pit_k \: : \: \Vettk(\PP) \rightarrow \Big( \P_{k}(\PP) \Big)^3 ,
\end{equation}
with the fundamental property of depending \emph{only} on the degrees of freedom \eqref{doe13}-\eqref{doe35e} (meaning that if $\vv$ satisfies \eqref{doe130}-\eqref{doe35e0}
then $\Pit_k\vv=0$).
We now introduce the space
\begin{multline}\label{L:Wdef}
\Wetk(\PP) :=\{\vv \in \Vettk(\PP) \mbox{ such that: } \\
\int_\PP (\Pit_k \vv) \cdot \qq_k \dP = \int_\PP (\Pi^0_k \vv) \cdot \qq_k \dP
\quad \forall \qq_{k} \in \Rort(\PP) \}.
\end{multline}
We then have the following lemma.

\begin{lemma}
The dimension of the space $\Wetk(\PP)$ is equal to the dimension of the original edge space $\Vetk(\PP)$. Moreover, the operators \eqref{doe13}-\eqref{doe35e} constitute a set of degrees of freedom for $\Wetk(\PP)$.
\end{lemma}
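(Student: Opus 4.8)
The plan is to show that the single extra constraint in \eqref{L:Wdef} exactly trades the additional degrees of freedom \eqref{doe36} of $\Vettk(\PP)$ for a prescribed (and computable) value, so that it changes neither the dimension nor the unisolvence of \eqref{doe13}-\eqref{doe35e}. The whole argument is linear algebra once the defining relation is rewritten, so I would first simplify that relation. Since every $\qq_k\in\Rort(\PP)$ lies in $\calR_k\subset(\P_k(\PP))^3$, and $\Pi^0_k$ is the $L^2(\PP)$ projection onto $(\P_k(\PP))^3$, one has $\int_\PP(\Pi^0_k\vv)\cdot\qq_k\dP=\int_\PP\vv\cdot\qq_k\dP$. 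Hence the condition in \eqref{L:Wdef} reads $\int_\PP\vv\cdot\qq_k\dP=\int_\PP(\Pit_k\vv)\cdot\qq_k\dP$ for all $\qq_k\in\Rort(\PP)$; in other words it prescribes the value of the degrees of freedom \eqref{doe36} in terms of $\Pit_k\vv$.

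Next I would set up the bookkeeping. Let $\Lambda\colon\Vettk(\PP)\to\R^N$ collect the degrees of freedom \eqref{doe13}-\eqref{doe35e}, where $N=\dim\Vetk(\PP)$ is their number, and let $\mu(\vv)\in\Rort(\PP)^*$ be the functional $\qq_k\mapsto\int_\PP\vv\cdot\qq_k\dP$ coming from \eqref{doe36}. Because \eqref{doe13}-\eqref{doe35e} together with \eqref{doe36} are unisolvent on $\Vettk(\PP)$, the map $(\Lambda,\mu)\colon\Vettk(\PP)\to\R^N\times\Rort(\PP)^*$ is an isomorphism. The assumed property of $\Pit_k$, namely that $\Pit_k\vv=0$ whenever \eqref{doe130}-\eqref{doe35e0} hold, says exactly that $\ker\Lambda\subseteq\ker\Pit_k$; by linearity $\Pit_k$ therefore factors through $\Lambda$, and consequently the functional $\nu(\vv)\colon\qq_k\mapsto\int_\PP(\Pit_k\vv)\cdot\qq_k\dP$ is a linear function of $\Lambda\vv$ alone, say $\nu=\nu_0\circ\Lambda$ for a linear $\nu_0\colon\R^N\to\Rort(\PP)^*$.

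Finally I would read off both conclusions from a triangular change of unknowns. By the previous paragraph, $\Wetk(\PP)$ is the kernel of $\vv\mapsto\mu(\vv)-\nu_0(\Lambda\vv)$, so I consider $\Phi(\vv):=\bigl(\Lambda\vv,\ \mu(\vv)-\nu_0(\Lambda\vv)\bigr)$. This is the isomorphism $(\Lambda,\mu)$ followed by the automorphism $(a,b)\mapsto(a,b-\nu_0(a))$ of $\R^N\times\Rort(\PP)^*$, which is invertible because it is block-triangular with identity blocks on the diagonal; hence $\Phi$ is an isomorphism. Since $\Wetk(\PP)=\Phi^{-1}(\R^N\times\{0\})$, we immediately obtain $\dim\Wetk(\PP)=N=\dim\Vetk(\PP)$, and, reading the first component of $\Phi$, the map $\Lambda$ restricts to an isomorphism of $\Wetk(\PP)$ onto $\R^N$, which is precisely the statement that \eqref{doe13}-\eqref{doe35e} are unisolvent on $\Wetk(\PP)$.

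The only points needing genuine care are the two structural facts feeding the triangular argument: the identity $\int_\PP(\Pi^0_k\vv)\cdot\qq_k\dP=\int_\PP\vv\cdot\qq_k\dP$, which relies on the inclusion $\Rort(\PP)\subset\calR_k\subset(\P_k(\PP))^3$, and the factorization $\Pit_k=(\,\cdot\,)\circ\Lambda$ through the degrees of freedom \eqref{doe13}-\eqref{doe35e}, which is exactly the assumed property of $\Pit_k$ in \eqref{L:Pit}. Once these are in place, everything reduces to the invertibility of a triangular map and no further estimates are required.
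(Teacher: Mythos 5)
Your proof is correct, and it rests on exactly the same three ingredients as the paper's: the fact that \eqref{doe13}--\eqref{doe35e} together with \eqref{doe36} form a set of degrees of freedom for $\Vettk(\PP)$, the assumed property that $\Pit_k$ depends only on the degrees of freedom \eqref{doe13}--\eqref{doe35e}, and the $L^2$-projection identity $\int_\PP(\Pi^0_k\vv)\cdot\qq_k\dP=\int_\PP\vv\cdot\qq_k\dP$ for $\qq_k\in\Rort(\PP)$ (which the paper uses, read right to left, in \eqref{extra-dof0}). The organization, however, is genuinely different. The paper argues in two steps: first the inequality $\dim(\Wetk(\PP))\ge\dim(\Vettk(\PP))-\dim(\Rort(\PP))=\dim(\Vetk(\PP))$, obtained by counting the linear constraints in \eqref{L:Wdef}; then unisolvence, by taking $\vv\in\Wetk(\PP)$ satisfying \eqref{doe130}--\eqref{doe35e0}, deducing $\Pit_k\vv=0$, hence (by the definition of $\Wetk(\PP)$ and the projection identity) that the extra moments \eqref{doe36} vanish as well, so that $\vv=0$ by unisolvence on the larger space. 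You instead rewrite the defining constraint as $\mu(\vv)=\nu_0(\Lambda\vv)$ and exhibit $\Wetk(\PP)$ as the preimage of $\R^N\times\{0\}$ under the isomorphism $(\Lambda,\mu)$ composed with a block-triangular automorphism, which delivers the dimension count and the unisolvence in a single stroke, with no inequality step. What the paper's version buys is brevity and an elementary flavor (one only checks injectivity on a candidate kernel element); what yours buys is structural transparency: it makes explicit that the condition in \eqref{L:Wdef} is precisely a triangular prescription of the degrees of freedom \eqref{doe36} as functions of the first $N$ ones, so that the equality of dimensions is a change of coordinates rather than the outcome of a counting argument combined with injectivity.
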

\begin{proof}
By definition of $\Wetk(\PP)$ we have
$$
\dim \Big( \Wetk(\PP) \Big) \ge \dim \Big( \Vettk(\PP) \Big) -  \dim \Big( \Rort(\PP) \Big) = \dim \Big( \Vetk(\PP) \Big) .
$$
Therefore, in order to conclude the lemma, it is sufficient to show the unisolvence of
\eqref{doe13}-\eqref{doe35e}. For this, let $\vv \in \Wetk(\PP)$ satisfying \eqref{doe130}-\eqref{doe35e0}. Note that, by the previously mentioned property of the (linear) projection operator $\Pit_k$, we immediately have  that $\Pit_k(\vv)$ is equal to $0$. Therefore, by definition of $\Wetk(\PP)$, for all $\qq_{k} \in \Rort(\PP)$ it holds
\begin{equation}\label{extra-dof0}
\int_{\PP}{\vv\cdot \qq_{k}}\dP = \int_{\PP}{ \Big(\Pi^0_k \vv \Big) \cdot \qq_{k}}\dP
=\int_{\PP}{\Big(\Pit_k \vv \Big) \cdot \qq_{k}}\dP = 0 .
\end{equation}
Since $ \Wetk(\PP) \subseteq \Vettk(\PP)$ and the set of degrees of freedom \eqref{doe13}-\eqref{doe35e} plus \eqref{doe36} is unisolvent for $\Vettk(\PP)$, we conclude that \eqref{doe130}-\eqref{doe35e0} plus \eqref{extra-dof0} imply $\vv=0$.
\end{proof}

Note that, due to the above lemma, the enhanced space $\Wetk(\PP)$ has the same degrees of freedom as $\Vetk(\PP)$. Moreover, since the condition in \eqref{L:Wdef} is satisfied by polynomials of degree $k$, we still have $(\P_k(\PP))^3 \subseteq \Wetk(\PP)$.
The advantage of the space $\Wetk(\PP)$ with respect to $\Vetk(\PP)$ is that in $\Wetk(\PP)$ we can compute all the moments of order up to $k$. Indeed, the moments
$$
\begin{aligned}
& \int_{\PP}{\vv\cdot \qq_{k-2}}\dP  \quad \mbox{ for all $\qq_{k-2}\in\calR_{k-2}(\PP)$} , \\
& \int_{\PP}{\vv\cdot \qq_{k}^\perp}\dP  \quad \mbox{ for all $\qq_{k}\in\calR_{k}^{\perp}(\PP)$}
\end{aligned}
$$
can be computed using the degrees of freedom \eqref{doe34ec} and \eqref{doe35e}, while
\begin{equation}\label{projort}
\int_{\PP}{\vv\cdot \qq_{k}}\dP = \int_{\PP}{ \Big(\Pi^0_k \vv \Big) \cdot \qq_{k}}\dP
=\int_{\PP}{\Big(\Pit_k \vv \Big) \cdot \qq_{k}}\dP
\end{equation}
for all $\qq_k \in \Rort(\PP)$.

We are therefore left with the duty to build a projection operator $\Pit_k$ as in \eqref{L:Pit}.
Let
%, in order to keep a shorter notation, the integer
$N$ denote the dimension of the space $\Vetk(\PP)$, i.e. the number of degrees of freedom \eqref{doe13}-\eqref{doe35e}.  Let us introduce the operator
$$
\dofop : \Vettk(\PP) \longrightarrow {\mathbb R}^N
$$
that associates, to any $\vv\in\Vettk(\PP)$, a vector with components given by the evaluation of all the (ordered) operators \eqref{doe13}-\eqref{doe35e} on $\vv$ (in other words, $\dofop$ associates to every
element of $\Vettk(\PP)$ its ``first $N$''  degrees of freedom).
Note that the operator $\dofop$ is not injective (as the dimension of  $\Vettk(\PP)$ is {\it bigger}
than that of $\Vetk$, that in turn is equal to $N$). On the other hand, since $(\P_k(\PP))^3 \subseteq \Vetk(\PP)$ and the above $N$ operators are a set of degrees of freedom for $\Vetk(\PP)$,  the operator $\dofop$ {\it restricted to $(\P_k(\PP))^3$ is injective}.
{\color{dblue} Given now {\it any} symmetric and positive definite bilinear form $\calS$ defined on $\R^N\times\R^N$ we define the projection operator $\Pit_k^{\calS}$ as follows.
For all $\vv \in \Vettk(\PP)$:
\begin{equation}\label{proje}
\left\{
\begin{aligned}
&  \Pit_k^{\calS} \vv \in \big( \P_k(\PP) \big)^3 \\
& \calS\Big( \dofop \, \Pit_k^{\calS} \vv - \dofop \vv , \dofop {\bf q}_k\Big) = 0 \qquad \forall {\bf q}_k \in \big( \P_k(\PP) \big)^3.
\end{aligned}
\right.
\end{equation}}
 By recalling that $\dofop$ is injective on $(\P_k(\PP))^3$, it is immediate to check that the above operator is well defined. Moreover, by definition it depends only on the degrees of freedom \eqref{doe13}-\eqref{doe35e}.

{\color{dblue}
 \begin{remark}Our construction is pretty general. Actually it is not difficult to prove that
 for every projector $\calP$ onto $(\P_k(\PP))^3$ depending only on the degrees of freedom \eqref{doe13}-\eqref{doe35e} we can find a bilinear symmetric positive definite form $\calS$
 such that $\calP = \Pi_{k}^{\calS}$.
 \end{remark}

 \begin{remark} The construction of the enhanced space $\Wetk(\PP)$ has basically a theoretical interest. In practice (meaning, in writing the code) one doesn't even need to know what this space is. If one needs to use the $L^2$ projection of the elements of $\Vetk$, one can just
 use the construction \eqref{proje} (typically, with $\calS$ equal to the Euclidean scalar product
 in $\R^N$) in order to define $\Pit_k$, and then \eqref{projort} to get the $L^2$ projection.
 \end{remark}}

%------------------------------------------------
\section{Scalar VEM spaces}
%------------------------------------------------

In the present section we restrict our {reminders} to the three dimensional case, the two dimensional one being simpler and analogous. We denote as usual with $\PP$ a generic polyhedron.

%------------------------------------------------
\subsection{VEM vertex elements}
%------------------------------------------------

We start by recalling briefly the $H^1$-conforming scalar space introduced in \cite{volley}, here generalized to three dimensions. For computing the $L^2-$projection in this case we refer to \cite{projectors}.
Let as usual $k$ be an integer $\ge 1$.
\begin{definition}\label{def:boundspace1}
We define $B_{k}^{\rm vert}(\partial\PP)$ as
the set of functions $v\in C^0(\partial\PP)$ such that
{\color{dblue}
$v_{|e}\in\P_{k}(e)$
on each edge $e\in\partial\PP$, and
on each face $f \in \partial\PP$ it holds $\Delta_2 v_{|f }\in \P_{k-2}(f)$ 
where  $\Delta_2$ is the planar Laplace operator on $f$}.
\end{definition}
\noindent We introduce the family of local vertex spaces $V^{\rm vert}_{3,k}(\PP) \subset H^1(\PP)$ as
%depending on $k$
\begin{equation}\label{Vhnodal}
V^{\rm vert}_{3,k}(\PP):=\{v|~v_{|\partial\PP}\in B_{k}^{\rm vert}(\partial\PP)
\mbox{ and } \Delta v \in\P_{k-2}(\PP)\} ,
\end{equation}
with the associated set of degrees of freedom:
\begin{eqnarray}
&&{\color{dblue}\bullet} \mbox{ the pointwise value } v(\nu)
\mbox{ for all vertex $\nu$} ,
\label{marzo:vertdof:1}
\\[3pt]
&&{\color{dblue}\bullet} \int_e{v}\,p_{k-2}\,\de
\mbox{ for all edge $e$, for all }
p_{k-2}\in\P_{k-2}(e),
\\[3pt]
&&{\color{dblue}\bullet} \int_{f}{v \, p_{k-2}}\,{\rm d}f
\mbox{ for all face $f$, for all }
p_{k-2} \in\P_{k-2}(f),
\\[3pt]
&&{\color{dblue}\bullet}  \int_{\PP}{v \, p_{k-2}} \dP \quad
\mbox{ for all } p_{k-2} \in\P_{k-2}(\PP) .
\label{marzo:vertdof:4}
\end{eqnarray}
The dimension of the space is thus given by
\begin{equation}\label{added_dim_vert}
\textrm{dim} \big( \Vhv(\PP) \big) = \ell_v + \ell_e \pi_{k-2,1} + \ell_f \pi_{k-2,2} + \pi_{k-2,3} \: .
\end{equation}
As in the above section we can also consider the global spaces. Assuming that we have a polyhedral
domain $\Omega$ and a decomposition $\Th$ in a finite number of polyhedra $\PP$, we can define the global space
{\color{dblue}
\begin{multline}\label{Vhnodalglo}
V^{\rm vert}_{3,k}(\Omega):=\{v\in H^1(\Omega) \mbox{ such that } v_{|\partial\PP}\in B_{k}^{\rm vert}(\partial\PP)\\
\mbox{ and } \Delta v \in\P_{k-2}(\PP) \mbox{ for all elements } \PP\in \Th\} ,
\end{multline}}
with the associated set of degrees of freedom:
\begin{eqnarray}
&&{\color{dblue}\bullet} \mbox{ the pointwise value } v(\nu)
\mbox{ for all vertex $\nu$} ,
\label{marzo:vertdof:1glo}
\\[3pt]
&&{\color{dblue}\bullet} \int_e{v}\,p_{k-2}\,\de
\mbox{ for all edge $e$, for all }
p_{k-2}\in\P_{k-2}(e),
\\[3pt]
&&{\color{dblue}\bullet} \int_{f}{v \, p_{k-2}}{\rm d}f
\mbox{ for all face $f$, for all }
p_{k-2} \in\P_{k-2}(f),
\\[3pt]
&& {\color{dblue}\bullet} \int_{\PP}{v \, p_{k-2}} \dP \quad
\mbox{ for all element } \PP, \mbox{ for all } p_{k-2} \in\P_{k-2}(\PP) .
\label{marzo:vertdof:4glo}
\end{eqnarray}
The dimension of the global space is given by
\begin{multline*}%\label{added_dim_vert-glo}
\!\!\!\!\!\textrm{dim} \big( \Vhv(\Omega) \big) = \{\mbox{number of vertices }\in\Th \} +  \pi_{k-2,1}\times \{\mbox{number of edges }\in\Th\} \\+  \pi_{k-2,2}\times\{ \mbox{number of faces }\in\Th\} + \pi_{k-2,3}\times\{
\mbox{number of elements }\in\Th\} .
\end{multline*}

%------------------------------------------------
\subsection{VEM volume elements}
%------------------------------------------------

We finally introduce, for all integer $k\ge 0$, the family of volume spaces $\Vhelem{K} (\PP) := \P_{k}(\PP) \subset L^2(\PP)$, with the associated degrees of freedom
$$
\int_{\PP}{v \, p_{k}} \dP \quad \mbox{ for all } p_{k} \in\P_{k}(\PP) .
$$
 This is a actually a space of polynomials (like the ones used, for instance, in Discontinuous Galerkin
 methods), and to deal with it doesn't require any particular care. The corresponding global space will
 be
 \begin{equation}
 \Vhelem{k} (\Omega)=\{v\in L^2(\Omega)\mbox{ such that }v_{|\PP}\in\P_k(\PP)\,\forall\mbox{ element }\PP\in\Th\} .
 \end{equation}

%------------------------------------------------
\section{Virtual exact sequences}
%------------------------------------------------

We show now that, for the obvious  choices of the polynomial degrees, the set of virtual spaces introduced in this paper constitutes an exact sequence. We start with the (simpler) two-dimensional case.

\begin{theorem}
Let $k \ge 2$, and assume that $\Omega$ is a simply connected polygon, decomposed in a finite
number of polygons $\E$. Then the sequences
\begin{equation}\label{2D:virt:seq1:k}
\R\myarrow{i}
V^{\rm vert}_{2,k}(\Omega)
\myarrow{\bgrad}
V^{\rm edge}_{2,k-1}(\Omega)
\myarrow{\rot}
\P_{k-2}(\Omega)
\myarrow{o}
0
\end{equation}
and
\begin{equation}\label{2D:virt2:seq:k}
\R\myarrow{i}
V^{\rm vert}_{2,k}(\Omega)
\myarrow{\brot}
V^{\rm edge}_{3,k-1}(\Omega)
\myarrow{\div}
\P_{k-2}(\Omega)
\myarrow{o}
0
\end{equation}
are both exact sequences.
\end{theorem}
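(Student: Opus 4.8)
The plan is to establish exactness node by node, after first checking that the two maps are well defined between the indicated spaces. For the first sequence well-definedness is a degree count: if $v\in V^{\rm vert}_{2,k}(\Omega)$ then on each edge $e$ one has $v_{|e}\in\P_k(e)$, so $\bgrad v\cdot\tt\in\P_{k-1}(e)$; moreover $\div\bgrad v=\Delta v\in\P_{k-2}$ element-wise and $\rot\bgrad v=0$, so $\bgrad v$ meets every constraint defining $V^{\rm edge}_{2,k-1}(\Omega)$, and its tangential trace is continuous because $v\in H^1(\Omega)$. Likewise $\rot$ sends $V^{\rm edge}_{2,k-1}(\Omega)$ into piecewise $\P_{k-2}(\Omega)$ by the very definition of that space. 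Exactness at $\R$ and at $V^{\rm vert}_{2,k}(\Omega)$ is then immediate: $i$ is injective, and $\ker(\bgrad)$ consists of the locally constant $H^1(\Omega)$ functions, hence, since $\Omega$ is connected, exactly the constants $=\mathrm{im}(i)$.

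The heart of the argument is exactness at $V^{\rm edge}_{2,k-1}(\Omega)$, i.e. $\ker(\rot)=\mathrm{im}(\bgrad)$. The inclusion $\mathrm{im}(\bgrad)\subseteq\ker(\rot)$ is just $\rot\,\bgrad=0$. For the converse, let $\vv\in V^{\rm edge}_{2,k-1}(\Omega)$ with $\rot\vv=0$. Here the global continuity of the tangential component is exactly what guarantees $\vv\in H(\rot;\Omega)$ with vanishing distributional curl on all of $\Omega$; since $\Omega$ is simply connected, Remark \ref{calculus} yields a single-valued potential $v\in H^1(\Omega)$ with $\vv=\bgrad v$. It then remains to verify $v\in V^{\rm vert}_{2,k}(\Omega)$: on each edge $\bgrad v\cdot\tt=\vv\cdot\tt\in\P_{k-1}(e)$ forces $v_{|e}\in\P_k(e)$, and $\Delta v=\div\vv\in\P_{k-2}$ element-wise, which are precisely the defining conditions. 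I expect this step, coupling the topological Poincar\'e lemma with the check that the reconstructed potential satisfies the internal VEM constraints, to be the main obstacle, since it is where both the global conformity of the edge space and the simple-connectedness of $\Omega$ are genuinely used.

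Exactness at $\P_{k-2}(\Omega)$ amounts to surjectivity of $\rot$. Rather than constructing a preimage by hand, I would close the sequence by a dimension count: using the global dimension formulas for $V^{\rm vert}_{2,k}(\Omega)$, for $V^{\rm edge}_{2,k-1}(\Omega)$, and for piecewise $\P_{k-2}(\Omega)$, the alternating sum of dimensions collapses to $1-\#\{\text{vertices}\}+\#\{\text{edges}\}-\#\{\text{elements}\}$, which vanishes exactly by Euler's formula for a simply connected planar decomposition. Combined with the exactness already proved at the three earlier nodes, this forces $\rot$ to be onto and completes the first sequence. It is worth noting that here simple-connectedness re-enters in a second, independent way (through Euler's formula), mirroring its role in the Poincar\'e step.

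Finally, the second sequence is obtained from the first by the rotation by $\pi/2$ already exploited in Section \ref{sec:edge:2D}: rotating vector fields interchanges $\bgrad\leftrightarrow\brot$ and $\rot\leftrightarrow\div$, and carries the $H(\rot)$-conforming edge space onto the $H(\div)$-conforming face space, while leaving the scalar vertex space untouched. The identical chain of arguments, well-definedness by degree count, kernel-equals-image at the middle node via the $\div\vv=0$ version of Remark \ref{calculus}, and surjectivity of $\div$ by the same dimension/Euler count, then applies verbatim.
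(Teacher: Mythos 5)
Your proof is correct, and most of it coincides with the paper's: the well-definedness checks, exactness at $\R$ and at $V^{\rm vert}_{2,k}(\Omega)$, the middle-node argument (simple connectedness plus Remark~\ref{calculus} to produce a potential $\varphi\in H^1(\Omega)$, then the verification that $\varphi$ satisfies the edge and interior VEM constraints) is exactly the paper's step \textbf{a.1}, and the reduction of the second sequence to the first by a rotation of $\pi/2$ is also the paper's. Where you genuinely diverge is the surjectivity of $\rot$ (exactness at $\P_{k-2}(\Omega)$). The paper proves this constructively (step \textbf{a.2}): it solves an auxiliary problem $\rot\bphi=q$ in $(H^1(\Omega))^2$ with constant tangential trace on $\partial\Omega$, then builds $\vv$ by setting $\vv\cdot\tt_e=\Pi^0_{k-1}(\bphi\cdot\tt_e)$ on each edge and $\rot\vv=q$, $\div\vv=0$ inside each element (the $L^2$ projection preserves edge averages, so the local compatibility condition between $\rot\vv$ and the tangential data holds). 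You instead close the complex by counting: your count is right, since the paper's global formula $\dim V^{\rm edge}_{2,k-1}(\Omega)=k\,\{\text{edges}\}+(2\pi_{k-2,2}-1)\{\text{elements}\}$ together with the 2D analogue of the vertex-space formula, $\dim V^{\rm vert}_{2,k}(\Omega)=\{\text{vertices}\}+(k-1)\{\text{edges}\}+\pi_{k-2,2}\{\text{elements}\}$, makes the alternating sum collapse to $1-\{\text{vertices}\}+\{\text{edges}\}-\{\text{elements}\}$, which vanishes by Euler's formula, and exactness at the first three nodes then forces the cokernel of $\rot$ to vanish. What your route buys is economy: no auxiliary div--rot solve and no compatibility verification, just linear algebra on top of the unisolvence results. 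What it costs is portability: it relies on the global dimension formula for the 2D vertex space (which the paper states only in 3D, though it is elementary), and, more significantly, it does not yield the content of Remark~\ref{closed}, where the paper reruns the explicit construction of \textbf{a.2} on the boundary of a polyhedron --- a closed surface, where the Euler count is $2$ rather than $1$ and surjectivity holds only onto data satisfying the balance condition \eqref{balance}; that remark is precisely the ingredient used in step \textbf{b.2} of the three-dimensional theorem. So the paper's constructive argument is the one that propagates to 3D, while yours settles the 2D statement more cheaply.
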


\begin{proof}
We note first that the two sequences are practically the same, up to a rotation of $\pi/2$. Hence
we will just show the exactness of the sequence \eqref{2D:virt:seq1:k}. Essentially, the only non-trivial
part will be to show that
\begin{itemize}
\item {\bf a.1}
for every $\vv\in V^{\rm edge}_{2,k-1}$ with $\rot\vv=0$ there exists a
$\varphi\in V^{\rm vert}_{2,k}$ such that $\bgrad\varphi=\vv$.
\item {\bf a.2} for every $q\in V^{\rm elem}_{2,k-2}(\Omega)$ there exists a
$\vv\in V^{\rm edge}_{2,k-1}(\Omega)$ such that $\rot\vv=q$.
\end{itemize}

We start with {\bf a.1}. As $\Omega$ is simply connected, we have that  the condition $\rot\vv=0$ implies that  there exist a function $\varphi\in H^1(\Omega)$ such that $\bgrad\varphi=\vv$ { in } $\Omega$. On every edge $e$
of $\Th$ such $\varphi$
will obviously satisfy, as well:
\begin{equation}\label{tancomp}
 \frac{\partial \varphi}{\partial \tt_e}=\vv\cdot\tt_e \in \P_{k-1}(e).
\end{equation}
Then the restriction of $\varphi$ to each $\E\in\Th$ verifies:
\begin{equation}
\varphi_{|e}\in \P_k(e)\;\forall e\in \partial\E;\qquad \Delta\varphi\equiv\div\vv\,\in\P_{k-2}(\E)
\end{equation}
so that clearly $\varphi\in V^{\rm vert}_{2,k}$.

To deal with {\bf a.2}, we first construct a $\bphi$ in $(H^1(\Omega))^2$ such that
$\rot\bphi =q$ and
\begin{equation}
\bphi\cdot\tt=\frac{\int_{\Omega}q\dx}{|\partial\Omega|}\quad\mbox{ on }\partial\Omega,
\end{equation}
where $\tt$ is the unit counterclockwise tangent vector to $\partial\Omega$ and $|\partial\Omega|$
is the length of $\partial\Omega$. Then we consider the element $\vv\in V^{\rm edge}_{2,k-1}(\Omega)$
such that
\begin{equation}
\vv\cdot\tt_e:=\Pi^0_{k-1}(\bphi\cdot\tt_e)\;\forall \mbox{ edge } e \mbox{ in }\Th
\end{equation}
and, within each element $\E$:
\begin{equation}
\rot\vv=\rot\bphi=q, \qquad \div\vv=0.
\end{equation}
Clearly such a $\vv$ solves the problem.
\end{proof}

\begin{remark}\label{closed}

 The construction in the proof of {\bf a.2}  could also be done  if the two-dimensional domain $\Omega$
is a {\it closed surface}, obtained as union of polygons. To fix the ideas, assume that
we deal with the boundary $\partial \PP$ of a polyhedron $\PP$, and that we are given on every face
$f$ of $\PP$ a polynomial $q_f$ of degree $k-2$, in such a way that
\begin{equation}\label{balance}
\sum_{f\in\partial\PP}\int_{f}q_f{\rm d}f=0.
\end{equation}
Then there exists an element $\vv\in B^{\rm edge}_{k-1}(\partial\PP)$ such that on each face $f$ we have
$\rot_2(\vv_{|f})=q_f$. To see that this is true, we define first, for each face $f$, the number
$$\tau_f:=\int_f q_f{\rm d}f.$$
Then we fix, on each edge $e$, an orientation $\tt_e$, we orient each face $f$ with the outward
normal, and we define, {\it for each edge $e$ of $f$}, the counterclockwise tangent unit vector
$\tt^f_c$. Then we consider the {\it combinatorial problem} (defined
 on the topological decomposition $\Th$) of finding for each edge $e$
a real number $\sigma_e$  such that for each face $f$
\begin{equation}
\sum_{e\in\partial f}\sigma_e \tt_e\cdot\tt^f_c=\tau_f.
\end{equation}
This could be solved using the same approach used in the above proof, applied on a flat
polygonal decomposition that is topologically equivalent to the decomposition of
$\partial\PP$ without a face. The last face will fit automatically, due to \eqref{balance}.
Then we take $\vv$ such that on each edge $\vv\cdot\tt\in\P_{k-1}$ with
$\int_e\vv\cdot\tt_e\de=\sigma_e$, and for each face, $\div\vv_f=0$, $\rot\vv_f=q_f $.
\end{remark}

We are now ready to consider the three-dimensional case.
\begin{theorem} Let $k \ge 3$, and assume that $\Omega$ is a simply connected polyhedron, decomposed in a finite number of polyhedra $\PP$. Then the sequence
{\color{dblue}
\begin{equation}\label{3D:virt:seq:k}
\R
\vshortarrow{i}
V^{\rm vert}_{3,k}(\Omega)
\myarrow{\bgrad}
V^{\rm edge}_{3,k-1}(\Omega)
\myarrow{\bcurl}
V^{\rm face}_{3,k-2}(\Omega)
\shortarrow{\div}
\P_{k-3}(\Omega)
\vshortarrow{o}
0
\end{equation}
is exact.}
\end{theorem}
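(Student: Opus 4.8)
The plan is to establish exactness separately at each of the four interior nodes of \eqref{3D:virt:seq:k}, following the pattern of the two-dimensional theorem proved above and of Remark \ref{closed}. As a preliminary step I would check that the three operators map the indicated spaces into one another: for $\varphi\in V^{\rm vert}_{3,k}(\Omega)$ one has $\div\bgrad\varphi=\Delta\varphi\in\P_{k-2}(\PP)$, $\bcurl\bcurl\bgrad\varphi=0$, and $(\bgrad\varphi)_t=\bgrad_2(\varphi_{|f})\in V^{\rm edge}_{2,k-1}(f)$ on each face, so $\bgrad\varphi\in V^{\rm edge}_{3,k-1}(\Omega)$; for $\vv\in V^{\rm edge}_{3,k-1}(\Omega)$ one has $\div\bcurl\vv=0$, $\bcurl\bcurl\vv\in\calR_{k-3}(\PP)$, and $(\bcurl\vv)\cdot\nn^f_\PP=\rot_2(\vv_f)\in\P_{k-2}(f)$, single valued because $\vv\in H(\bcurl;\Omega)$, so $\bcurl\vv\in V^{\rm face}_{3,k-2}(\Omega)$; and $\div\vv\in\P_{k-3}(\PP)$ for $\vv\in V^{\rm face}_{3,k-2}(\Omega)$. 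Since $\bcurl\bgrad=0$ and $\div\bcurl=0$, in each case the image sits in the kernel of the next operator, and only the reverse inclusions remain. Exactness at $V^{\rm vert}_{3,k}(\Omega)$ is immediate: on the connected $\Omega$, $\bgrad\varphi=0$ forces $\varphi$ constant, i.e. $\ker\bgrad=\operatorname{im}(i)$.

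For exactness at $V^{\rm edge}_{3,k-1}(\Omega)$ I would repeat the argument of {\bf a.1}. Given $\vv\in V^{\rm edge}_{3,k-1}(\Omega)$ with $\bcurl\vv=0$, simple connectedness and Remark \ref{calculus} provide $\varphi\in H^1(\Omega)$ with $\bgrad\varphi=\vv$. Its edge restrictions satisfy $\partial\varphi/\partial\tt_e=\vv\cdot\tt_e\in\P_{k-1}(e)$, hence $\varphi_{|e}\in\P_k(e)$; on each face $\Delta_2\varphi_{|f}=\div_2\vv_f\in\P_{k-2}(f)$ because $\vv_f\in V^{\rm edge}_{2,k-1}(f)$; and in each element $\Delta\varphi=\div\vv\in\P_{k-2}(\PP)$. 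Together with the continuity inherited from $\varphi\in H^1(\Omega)$ this yields $\varphi\in V^{\rm vert}_{3,k}(\Omega)$, so $\ker\bcurl=\operatorname{im}\bgrad$.

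The central step is exactness at $V^{\rm face}_{3,k-2}(\Omega)$. Let $\vv\in V^{\rm face}_{3,k-2}(\Omega)$ with $\div\vv=0$. By simple connectedness (Remark \ref{calculus}) there is a sufficiently regular potential $\bphi\in(H^1(\Omega))^3$ with $\bcurl\bphi=\vv$. I would then build $\ww\in V^{\rm edge}_{3,k-1}(\Omega)$ element by element through its boundary tangential trace, divergence, and $\bcurl\bcurl$, exactly as in the solvability statement \eqref{esiedge}: on each edge set $\ww\cdot\tt_e:=\Pi^0_{k-1}(\bphi\cdot\tt_e)$, single valued since $\bphi_t$ is continuous; on each face let $\ww_f\in V^{\rm edge}_{2,k-1}(f)$ be fixed by these edge values together with $\rot_2\ww_f:=\vv\cdot\nn^f_\PP$ and $\div_2\ww_f:=\Pi^0_{k-2}(\div_2\bphi_f)$; and inside each $\PP$ impose $\div\ww:=0$ and $\bcurl\bcurl\ww:=\bcurl\vv\in\calR_{k-3}(\PP)$. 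The per-face construction is feasible precisely because the Stokes compatibility $\int_f\rot_2\ww_f=\oint_{\partial f}\ww\cdot\tt$ reduces, via the projection and $\bcurl\bphi=\vv$, to $\int_f\vv\cdot\nn^f_\PP=\int_f\rot_2\bphi_f$, which holds automatically; anchoring all data to the single global $\bphi$ is what makes the tangential traces single valued across faces, so that no separate combinatorial solvability (as in Remark \ref{closed}) need be invoked. To conclude $\bcurl\ww=\vv$ I set $\boldsymbol\zeta:=\bcurl\ww-\vv$ and note that in each simply connected element $\div\boldsymbol\zeta=0$, $\bcurl\boldsymbol\zeta=\bcurl\bcurl\ww-\bcurl\vv=0$, and $\boldsymbol\zeta\cdot\nn^f_\PP=\rot_2\ww_f-\vv\cdot\nn^f_\PP=0$; writing $\boldsymbol\zeta=\bgrad\psi$ with $\psi$ harmonic and $\partial\psi/\partial\nn=0$ on $\partial\PP$ forces $\boldsymbol\zeta=0$. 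I expect this node to be the main obstacle, the delicate points being the existence of a regular global potential $\bphi$ on a merely simply connected polyhedron and the verification that the face-by-face virtual construction patches into a genuine $H(\bcurl;\Omega)$ field.

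Finally, surjectivity of $\div$ onto $\P_{k-3}(\Omega)$ (exactness at $\P_{k-3}$) follows the spirit of {\bf a.2}. Given a piecewise polynomial $q\in\P_{k-3}(\Omega)$, I would solve $\Delta u=q$ in $H^1_0(\Omega)$ and put $\bphi:=\bgrad u\in H(\div;\Omega)$, so $\div\bphi=q$ and $\bphi\cdot\nn$ is single valued across faces. Then define $\vv\in V^{\rm face}_{3,k-2}(\Omega)$ by $\vv\cdot\nn^f_\PP:=\Pi^0_{k-2}(\bphi\cdot\nn^f_\PP)$ on every face, together with $\div\vv:=q$ and $\bcurl\vv:=0$ in each element; this is solvable by \eqref{perdimftd} because the compatibility $\int_\PP q=\int_{\partial\PP}\vv\cdot\nn$ holds (the projection preserves face integrals, and $\int_{\partial\PP}\bphi\cdot\nn=\int_\PP\div\bphi=\int_\PP q$), while single-valuedness of $\vv\cdot\nn$ gives $\vv\in H(\div;\Omega)$. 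By construction $\div\vv=q$, completing the proof.
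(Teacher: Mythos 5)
Your overall skeleton --- checking the inclusions, then proving surjectivity-type statements at the three interior nodes --- is the same as the paper's (\textbf{b.1}, \textbf{b.2}, \textbf{b.3}), and two thirds of it matches the paper in substance: your \textbf{b.1} is identical, your \textbf{b.3} differs only in choosing $\bgrad u$ with $u\in H^1_0(\Omega)$ instead of the paper's field $\bbeta\in (H^1(\Omega))^3$ with constant normal flux, and within \textbf{b.2} your per-face Stokes compatibility check and your uniqueness argument ($\boldsymbol\zeta$ curl-free, divergence-free, with vanishing normal trace on each element, hence zero) are both correct and are exactly the paper's. The genuine divergence is in how the potential in \textbf{b.2} is obtained, and there your shortcut opens a real gap. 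You take an arbitrary vector potential $\bphi\in(H^1(\Omega))^3$ with $\bcurl\bphi=\vv$ and immediately form $\Pi^0_{k-1}(\bphi\cdot\tt_e)$ on every edge of $\Th$. But edges have codimension $2$: an $H^1(\Omega)$ field in three dimensions has no trace on a one-dimensional curve (there are $H^1$ fields blowing up along a line), so this quantity is not defined, and nothing in your construction supplies the missing regularity --- existence results of the type you invoke give exactly $H^1$ and no more. You flag this yourself as a ``delicate point'', but it is precisely the point, and the rest of the argument cannot absorb it.

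The paper's apparently roundabout route exists to handle exactly this. It first builds, by the combinatorial argument of Remark \ref{closed}, a boundary field $\bg\in B_{k-1}^{\rm edge}(\partial\Omega)$ with $\rot_2(\bg_{|f})=\btau\cdot\nn$ on each boundary face; this is precisely the compatibility condition required (see Section \ref{nota}) for solvability of the div-curl problem \eqref{divcurl1} with prescribed \emph{tangential} boundary data, and it forces the potential $\bpsi$ to have tangential trace on $\partial\Omega$ equal to the piecewise polynomial $\bg$. Consequently the traces on all boundary edges are manifestly well defined (they are read off from $\bg$), and the only thing left unproven is an interior-regularity assertion, which the paper at least states explicitly (``has enough regularity to take the trace of its tangential component on each edge''). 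Your potential, having no boundary control whatsoever, needs unproven trace regularity also at the boundary edges and corners of the polyhedron --- exactly where solutions of div-curl systems are singular. So the claim that ``no separate combinatorial solvability (as in Remark \ref{closed}) need be invoked'' is where the proposal breaks down: to repair it you must either prove edge-trace regularity for your generic $\bphi$, or reinstate the paper's construction of compatible piecewise-polynomial tangential boundary data before solving the div-curl problem.
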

\begin{proof}
It is pretty much obvious, looking at the definitions of the spaces, that
\begin{itemize}
\item a constant function
is in $V^{\rm vert}_{3,k}(\Omega)$ and has zero gradient,
\item the gradient of a function of
$V^{\rm vert}_{3,k}(\Omega)$ is in $V^{\rm edge}_{3,k-1}(\Omega)$ and has zero $\bcurl$,
\item the $\bcurl$  of a vector in $V^{\rm edge}_{3,k-1}(\Omega)$ is in $V^{\rm face}_{3,k-2}(\Omega)$
and has zero divergence,
\item the divergence of a vector of $V^{\rm face}_{3,k-2}(\Omega)$
is in $V^{\rm elem}_{3,k-3}(\Omega)$.
\end{itemize}

Hence, essentially, we have to prove that:
\begin{itemize}
\item {\bf b.1}
for every $\vv\in V^{\rm edge}_{3,k-1}(\Omega)$ with $\bcurl\vv=0$ there exists a
$\varphi\in V^{\rm vert}_{3,k}$ such that $\bgrad\varphi=\vv$.
\item {\bf b.2} for every $\btau\in V^{\rm face}_{3,k-2}(\Omega)$ with $\div\btau=0$ there exists
a $\bphi\in V^{\rm edge}_{3,k-1}(\Omega)$ such that $\bcurl\bphi=\btau$
\item {\bf b.3} for every $q\in V^{\rm elem}_{3,k-3}(\Omega)$ there exists a $\bsigma\in V^{\rm face}_{3,k-2}$ such that $\div\bsigma=q$.
\end{itemize}

The proof of {\bf b.1} is immediate, as in the two-dimensional case $[2.1]$: the function (unique up to a constant) $\vphi$ such that $\bgrad\vphi=\vv$ will verify \eqref{tancomp} on each edge. Moreover, its
restriction $\vphi_f$ to each face $f$ will satisfy $\bgrad_2\vphi=\vv_f$, and so on.

Let us  therefore look at {\bf b.2}. Given $\btau\in V^{\rm face}_{3,k-2}(\Omega)$ with $\div\btau=0$
we first consider (as in Remark \ref{closed}) the element $\bg\in B^{edge}_{k-1}(\partial\Omega)$
such that, on each face $f\in\partial\Omega$
\begin{equation}
\rot_2(\bg_{|f})=\btau\cdot\nn\;(\in \P_{k-2}(f)).
\end{equation}
{\color{dblue} Note that
 \begin{equation}
\sum_{f\in\partial\Omega}\int_f\btau\cdot\nn_{\Omega}^f\df=\int_{\Omega}\div\btau\dO\,=\,0,
\end{equation}
so that the compatibility condition \eqref{balance} is satisfied.}
Then we solve in $\Omega$ the $Div-Curl$ problem
 \begin{equation}\label{divcurl1}
 \div\bpsi=0\quad\mbox{ and }\quad\bcurl\bpsi=\btau\quad \mbox{in $\Omega$},\qquad\mbox{ with }\quad \bpsi_t=\bg
\quad\mbox {on }\partial\Omega .
\end{equation}
The (unique) solution of \eqref{divcurl1} has enough regularity to take the trace of its tangential
component on each edge $e$, and therefore, after deciding an orientation $\tt_e$ for every edge
$e$ in $\Th$, we can take
\begin{equation}
\eta_e:=\Pi^0_{k-1}(\bpsi\cdot\tt_e)\qquad\mbox{ on each edge } e \mbox{ in }\Th.
\end{equation}
At this point, for each element $\PP$ we construct $\bphi\in B^{\rm edge}_{k-1}(\partial\PP)$
by requiring that
{\color{dblue}
\begin{multline}\label{bphiinB}
\bphi\cdot\tt_e=\eta_e\,\mbox{ on each edge, }\ \rot_2 \bphi_f=\btau\cdot\nn_{\PP}^f \mbox{ and }
\div\bphi_f=0\,\mbox{ in each face }f\in\partial\PP.
\end{multline}}
Then we can define $\bphi$ inside each element by choosing, together with \eqref{bphiinB},
\begin{equation}\label{bphiinPP}
 \bcurl\bphi=\btau \mbox{ and } \div\bphi=0\quad\mbox{ in each element }\PP.
\end{equation}
It is easy to see that the boundary conditions given in \eqref{bphiinB} are {\it compatible} with
the requirement $\bcurl\bphi=\btau$, so that the solution of \eqref{bphiinPP} exists. Moreover
it is easy to see that all the necessary orientations fit, in such a way that $\bcurl\bphi$
is globally in $(L^2(\Omega))^3$, so that actually $\bphi\in V^{\rm edge}_{3,k-1}(\Omega)$.

Finally, we have to prove {\bf b.3}. The proof follows very closely the two dimensional case: given
$q\in V^{\rm elem}_{3,k-3}(\Omega)$, we first choose $\bbeta\in (H^1(\Omega))^3$ such that
\begin{equation}
\div \bbeta=q\; \mbox{ in }\Omega\quad \mbox{ and }\bbeta\cdot\nn_{\Omega}=\frac{\int_{\Omega}q\dO}{|\partial\Omega|}
\end{equation}
where, now, $|\partial\Omega|$ is obviously the {\it area} of $\partial\Omega$. Then on each face $f$ of $\Th$
we take
\begin{equation}\label{sigman}
\bsigma\cdot\nn^f=\Pi^0_{k-2}(\bbeta\cdot\nn^f)
\end{equation}
 and inside each element $\PP$ we take $\div\bsigma =q$
and $\bcurl\bsigma =0$. Note again that condition $\div\bsigma =q$ is {\it compatible}
 with the boundary conditions \eqref{sigman} and the orientations will fit in such a way that
 actually $\div\bsigma\in L^2(\Omega)$, so that $\bsigma\in V^{\rm face}_{3,k-2}(\Omega)$.

\end{proof}

\begin{remark} Although here we are not dealing with applications, we point out that, as is
well known (see e.g. \cite{Bossavit:1987}, \cite{Matiussi:1997}, \cite{Hiptmair:2001}, \cite{AFW-Acta}), the exactness of the above sequences are of paramount
importance in proving several properties (as the various forms of {\it inf-sup}, the ellipticity in the kernel,
etc.) that are crucial  in the study of convergence of mixed formulations (see e.g. \cite{Bo-Bre-For}).
\end{remark}

\section{A hint on more general cases}
As already pointed out in the final part of \cite{BFM-mixed} for the particular case of 2D face elements, we observe here that actually in all four cases considered in this paper (face elements and edge elements
in 2D and in 3D), we have at least three parameters to play with in order to create variants of our elements.

For instance, considering the case of 3D face elements, we could choose three different integers
$k_b$, $k_r$ and $k_d$ (all $\ge -1$) and consider, instead of \eqref{def:face:space:3D} the spaces
\begin{equation}\label{def:face:space:3D-v}
\begin{aligned}
V^{\rm face}_{3, {\bf k}}(\PP):=\{& \vv\in H(\div;\PP)\cap H(\bcurl;\PP)  \mbox{ such that: }  \vv\cdot\nn_{\PP}^f\in\P_{k_b}(f)\,\forall \mbox{ face $f$ of } \PP, \\
%\mbox { and }
& \bgrad\div\vv\in\calG_{k_d-1}(\PP),
%\mbox{ and }
~\bcurl\vv\in\calR_{k_r}(\PP)\},
\end{aligned}
\end{equation}
where obviously ${\bf k}$ is given by ${\bf k}:=(k_b,k_d,k_r)$. Taking, for a given integer $k$,
the three indices as $k_b=k,\;k_d=k-1,\;k_r =k-1$ we re-obtain  the elements in \eqref{def:face:space:3D}, that
in turn are the natural extension of the BDM $H({\rm div})$-conforming elements. On the other hand,
taking instead $k_b=k,\;k_d=k,\; k_r= k-1$, for $k\ge 0$ we {\color{dblue} would mimic} more the Raviart-Thomas elements.

We also point out that if we know a priori that {\color{dblue}(say, in a mixed formulation)} the {\it vector part} of the solution of our
problem will be a gradient, we could consider the choice $k_b=k,\;k_d=k-1,\; k_r=-1$ obtaining a space that contains all polynomial vectors in $\calG_k$ (that is: vectors that are gradients of some scalar polynomial of degree $\le {\color{dblue}{k+1}}$), a space that is rich enough to provide an optimal approximation of our unknown.

Similarly, for the spaces in \eqref{defVetkP} one can consider the variants

{\color{dblue}
\begin{multline}\label{defVetkP-v}
V^{\rm edge}_{3,{\bf k}}(\PP):=\{\vv|~\vv_{t}\in B_{k_b}^{\rm edge}(\partial\PP),
\div{\vv}\in\P_{k_d}(\PP), \mbox{ and } \bcurl\bcurl\vv\in\calR_{k_r-1}(\PP)\}.
\end{multline}}

On the other hand, for nodal VEMs we can play with two indices, say $k_b$ and $k_{\Delta}$,
to have
\begin{equation}\label{Vhnodal-v}
V^{\rm vert}_{3,{\bf k}}(\PP):=\{v|~v_{|\partial\PP}\in B_{k_b}^{\rm vert}(\partial\PP)
\mbox{ and } \Delta v \in\P_{k_{\Delta}-2}(\PP)\} ,
\end{equation}
and, needless to say, in the definition of $B_{k_b}(\partial\PP)$, the degree of $\Delta_2$ in each face could be different from $k_b$.

 Actually, to be sincere, the amount of possible variants looks overwhelming, and the need of
 numerical experiments (for different applications of practical interest) is enormous.

 \bibliographystyle{spmpsci}      % mathematics and physical sciences

\end{document}